\theoremstyle{plain}
\newtheorem{theorem}{Theorem}[section]
\newtheorem{corollary}[theorem]{Corollary}
\newtheorem{defn}[theorem]{Definition}
\newtheorem{prop}[theorem]{Proposition}
\newtheorem{lemma}[theorem]{Lemma}
\theoremstyle{definition}
\newtheorem{remark}[theorem]{Remark}
\newtheorem{notation}[theorem]{Notation}
\newtheorem{example}[theorem]{Example}
\DeclareMathOperator{\Mon}{mon}
\DeclareMathOperator{\ord}{ord}
\DeclareMathOperator{\id}{id}
\DeclareMathOperator{\m}{\mathfrak{m}}
\DeclareMathOperator{\jet}{jet}
\DeclareMathOperator{\supp}{supp}
\DeclareMathOperator{\Jac}{Jac}
\DeclareMathOperator{\dash}{-}
\DeclareMathOperator{\N}{\mathbb{N}}
\DeclareMathOperator{\R}{\mathbb{R}}
\DeclareMathOperator{\C}{\mathbb{C}}
\DeclareMathOperator{\coeff}{coeff}
\DeclareMathOperator{\sat}{sat}
\subjclass[2020]{
Primary 14B05; Secondary 32S25, 14Q05.
}
\title[Moduli Parameters of Non-Degenerate Newton Boundary Singularities]%
{Moduli Parameters of Complex Singularities with Non-Degenerate Newton Boundary}
\author{Janko B\"ohm}
\address{Janko B\"ohm\\
Department of Mathematics\\
University of Kaiserslautern\\
Erwin-Schr\"o-dinger-Str.\\
67663 Kaiserslautern\\
Germany}
\email{boehm@mathematik.uni-kl.de}
\author{Magdaleen S.\@ Marais}
\address{Magdaleen S.\@ Marais\\
Department of Mathematical Sciences \\
Stellenbosch University\\ and African Institute for Mathematical Sciences\\
P/Bag X1, Matieland 7602, Stellenbosch\\ South Africa\\
}
\email{msmarais@sun.ac.za}
\author{Gerhard Pfister}
\address{Gerhard Pfister\\
Department of Mathematics\\
University of Kaiserslautern\\
Erwin-Schr\"odin\-ger-Str.\\
67663 Kaiserslautern\\
Germany}
\email{pfister@mathematik.uni-kl.de}
\thanks{
This research was supported by 
Project B5 of SFB-TRR 195. Gef\"ordert durch die Deutsche Forschungsgemeinschaft
(DFG) - Projektnummer 286237555 - TRR 195 (Funded by the Deutsche
Forschungsgemeinschaft (DFG, German Research Foundation)~- Project-ID 286237555 - TRR 195).}
\keywords{%
Hypersurface singularities, Newton non-degenerate germs, moduli parameter, algorithmic classification, normal forms%
}
\begin{document}

\begin{abstract}

Our recent extension of Arnold's classification includes all singularities of corank $\leq 2$ equivalent to a germ with a non-degenerate Newton boundary, thus broadening the classification's scope significantly by a class which is unbounded with respect to modality and Milnor number. This method is based on proving that all right-equivalence classes within a $\mu$-constant stratum can be represented by a single normal form derived from a regular basis of a suitably selected special fiber. While both Arnold's and our preceding work on normal forms addresses the determination of a normal form family containing the given germ, this paper takes the next natural step: We present an algorithm for computing for a given germ the values of the moduli parameters in its normal form family, that is, a normal form equation in its stable equivalence class. This algorithm will be crucial for understanding the moduli stacks of such singularities. The implementation of this algorithm, along with the foundational classification techniques, is implemented in the library \texttt{arnold.lib} for the computer algebra system \textsc{Singular}.
\end{abstract}

\maketitle

\section{Introduction}\label{Introduction}

Our recent extension \cite{BMP2020} of Arnold's classification of isolated hypersurface singularities (see \cite{A1976, AVG1985}) includes all singularities of corank $\leq 2$ which are equivalent to a germ with non-degenerate Newton boundary in the sense of Kouchnirenko. This broadens the scope of the classification by a class of singularities, which is unbounded both in terms of modality and Milnor number. We establish that there is a single polynomial normal form that contains representatives (at least one, but only finitely many) of all right equivalence classes within a given $\mu$-constant stratum of a given input germ. Based on this result, and algorithmic methods developed in \cite{BMP2016, classify2}, an algorithm for effectively determining a normal form from a regular basis of a suitably chosen special fiber is given.

While both Arnold's and our preceding work on normal forms only addresses the determination of a normal form family containing the given germ, this continuation takes the next natural step:
determining the moduli parameters in the normal form families associated with these input germs. That is, we find for a given input germ an element in the normal form associated to its $\mu$-constant stratum such that this element is right equivalent to the input germ, hence determining exactly its stable and right equivalence class. While one could argue that, with the normal form known, such a germ could be found using an Ansatz for the right equivalence by taking finite determinacy into account, this is practically inefficient and will not yield a result except for trivial input. Taking our clue from some case-by-case studies in \cite{realclassify1, realclassify2, realclassify, BMP2016} we thus develop an iterative method for eliminating the terms of the germ which are not present in the normal form family. This methods has some similarities the algorithm finding the normal form. The main challenge arises here from the fact that singularities with non-degenerate Newton boundary in general do not satisfy Condition A. In an iterative process we thus have to control higher order contributions in the binomial expansion when applying a right equivalence to the germ (since those can be of lower piecewise degree). 
Applications of our result could occur in the context of the study of Baikov polynomials, see \cite{ibp,LP2013}.

Our paper is structured as follows:

In Section \ref{sec1} we give a review of the foundational concepts and preliminary results on singularities and classification.

In Section \ref{sec2}, we recall the main results on the determination of normal forms for singularities of corank $\leq 2$ equivalent to a germ with non-degenerate Newton boundary. We also recall the algorithmic framework determining the normal form.

In Section \ref{secparameters}, we address the determination of the moduli parameters in the normal form corresponding to a given input germ.\smallskip

\emph{Acknowledgements.} Our thanks go to Gert-Martin Greuel, Hans Schönemann for valuable discussions and insights. We also thank Alexander Mathis for work towards an experimental implementation in the computer algebra system \textsc{OSCAR}.

 \section{Definitions and Preliminary Results}\label{sec1}

In this section, fundamental definitions, theorems, and notations relevant to our discussion are presented. We use $\mathbb{C}\{x_1,\ldots,x_n\}$ to denote the ring of convergent power series, that is, power series that converge in open neighborhoods of the point $(0,\ldots,0)$.
We use $\mathfrak{m}$ for the maximal ideal of $\mathbb{C}\{x_1,\ldots,x_n\}$.

\begin{notation}
We denote by $\Mon(x_1,\ldots,x_n)$ the monoid of monomials in $x_1,\ldots,x_n$. For $f \in \mathbb{C}\{x_1,\ldots,x_n\}$ and a monomial $m\in \Mon(x_1,\ldots,x_n)$, the coefficient of $m$ in $f$ is denoted by $\coeff(f,m)$.
\end{notation}

\begin{defn}
If $w=(c_1,\ldots,c_n)\in\N^n$ is a weight for the variables $(x_1,\ldots,x_n)$, the $w$-weighted degree on $\Mon(x_1,\ldots,x_n)$ is defined by the expression \[w\dash\deg\left(\prod_{i=1}^n x_i^{s_i}\right) = \sum_{i=1}^n c_i s_i.\] In the case that the weight of all variables is one, the weighted degree of a monomial $m$ is called its standard degree, denoted by $\deg(m)$. This notation is also used for terms in polynomials.

\end{defn}

\begin{defn}\label{def:piecewiseWeight}
Consider a finite family of weights $w=(w_1,\ldots,w_s)\in(\N^n)^s$ for $(x_1,\ldots,x_n)$. For a term $m\in \C[x_1,\ldots,x_n]$, its \textbf{piecewise weight} with respect to $w$ is defined as
\begin{eqnarray*}
w\dash\deg(m)&:=&\min_{i=1,\ldots,s}w_i\dash\deg(m).\\
\end{eqnarray*}
\end{defn}

\begin{defn}
Fix a (piecewise) weight $w$ on $\Mon(x_1,\ldots,x_n)$.
\begin{enumerate}[leftmargin=10mm]
\item Suppose $$f = \sum_{i = 0}^{\infty} f_{i}$$ is the decomposition of $f \in \C\{x_1,\ldots,x_n\}$ into weighted homogeneous components $f_{i}$ with $w$-degree of $i$. In the case that $f_i=0$ for $i > d$ and $f_d \neq 0$ is the lowest non-zero component, we set $w\dash\deg(f) = d$. The \textbf{(piecewise) weighted $j$-jet} of~$f$, denoted by $w \dash \jet(f, j)$, is given by \[w \dash \jet(f, j) := \sum_{i = 0}^j f_{i} \,.\] The sum of terms of $f$ with the lowest $w$-degree is called the \textbf{principal part} of $f$ with respect to $w$. The \textbf{order} of $f$ with respect to $w$ is defined as the degree of its principal part, and is denoted by $w\dash\operatorname{ord}(f)$.

\item A power series in $\C\{x_1,\ldots,x_n\}$ is said to have \textbf{filtration} $d \in \N$ with respect to $w$ if all its monomials have a $w$-weighted degree $\geq d$. By $E_d^w$ we denote the sub-vector space of $\C\{x_1,\ldots,x_n\}$ of power series of filtration $d$ with respect to $w$. The sub-spaces $E_d^w$, for varying $d \in \N$, form a filtration on $\C\{x_1,\ldots,x_n\}$. 
\end{enumerate}

\end{defn}

\begin{defn}\label{conditionA}
A piecewise homogeneous germ $f_0$ of degree $d$ satisfies Condition A, if for every germ $g$ of filtration $d+\delta>d$ in the ideal spanned by the derivatives of $f_0$, there is a decomposition
\[g=\sum_i\frac{\partial f_0}{\partial x_i}v_i+g',\]
where the vector field $v$ has filtration $\delta$ and $g'$ has filtration bigger than $d+\delta$.
\end{defn}

\begin{defn}
We say that $f\in\m^2\subset \C\{x_1,\ldots,x_n\}$ is \textbf{$k$-determined} if $$ f\sim \jet(f,k)+g\qquad\text{for all } g\in E_{k+1},$$ with respect to right-equivalence. The \textbf{determinacy} of $f$, denoted by $\operatorname{dt}(f)$, is the smallest integer $k$ for which $f$ is $k$-determined.

\end{defn}

\begin{defn}
For $f\in\mathbb C\{x_1,\ldots,x_n\}$, the \textbf{Jacobian ideal} $$\operatorname{Jac}(f)=\left\langle \frac{\partial f}{\partial x_1}, \ldots, \frac{\partial f}{\partial x_n} \right\rangle$$ is the ideal of $\mathbb C\{x_1,\ldots,x_n\}$ generated by the partial derivatives of $f$. The {\bf local algebra} \[Q_f = \mathbb{C}\{x_1, \ldots, x_n\} / \operatorname{Jac}(f)\] of $f$ is the quotient of $\mathbb C\{x_1,\ldots,x_n\}$ by the Jacobian ideal. The \textbf{Milnor number} of $f$ is the dimension of $Q_f$ as a $\mathbb C$-vector space.
\end{defn}

\begin{remark}
If the germ $f$ defines an isolated singularity, then $f$ is $k$-determined if $k\ge \mu(f)+1$, hence $f$ is finitely determined. So an isolated singularity can be represented by a polynomial.
\end{remark}

\begin{defn}
The \textbf{annihilator} of a germ $f$, denoted by $\operatorname{ann}(f)$, is the ideal of all elements of $\mathbb C\{x_1,\ldots,x_n\}$ that yield zero when multiplied with $f$.
\end{defn}

\begin{defn}
Suppose that  $\phi$ is a $\C$-algebra automorphism of $\C\{x_1,\ldots,x_n\}$, and  $w$ is a single weight on $\Mon(x_1,\ldots,x_n)$.

\begin{enumerate}[leftmargin=10mm]
\item
For any positive integer $j$, the automorphism $w\dash\jet(\phi,j):=\phi_j^w$, is defined by
\[
\phi_j^w(x_i) := w\dash\jet(\phi(x_i),w\dash\deg(x_i)+j) \quad
\text{for }i = 1,\ldots,n.
\]
If $w = (1, \ldots, 1)$, we use the notation $\phi_j$ for $\phi_j^w$.

\item\label{enum:filtration}
We say that $\phi$ has \textbf{filtration} $d$ if
\[
(\phi-\id)E_\lambda^w \subseteq E_{\lambda+d}^w
\]
for all $\lambda \in \N$.
\end{enumerate}

\end{defn}

\begin{remark}
We note that $\phi_0(x_i) = \jet(\phi(x_i), 1)$ for $i = 1, \ldots, n$.
Moreover, note that $\phi_0^w$ has filtration $\le 0$. For $j > 0$, $\phi_j^w$ has filtration $j$ if $\phi_{j-1}^w = \id$.
\end{remark}
\begin{defn}\label{def NB}
 For $f=\sum_{i_1,\ldots,i_n}a_{i_1,\ldots,i_n}x_1^{i_1}\cdots x_n^{i_n}\in\C\{x_1,\ldots,x_n\}$, write
 \begin{eqnarray*}
 \Mon(f)&:=&\{x_1^{i_1}\cdots x_n^{i_n}\ |\ a_{i_1,\ldots,i_n}\neq 0\}\\
 \end{eqnarray*}
 for the set of \textbf{monomials} of $f$, and
 \begin{eqnarray*}
 \sup(f)&:=&\{{i_1}\cdots {i_n}\ |\ a_{i_1,\ldots,i_n}\neq 0\}\\
 \end{eqnarray*}
  for the \textbf{support} of $f$. We set
\begin{eqnarray*}
\Gamma_+(f)&:=&\displaystyle{\bigcup_{x_1^{i_1}\cdots x_n^{i_n}\in\supp(f)}}((i_1,\ldots,i_n)+\R^n_+)\\
\end{eqnarray*}
and define $\Gamma(f)$ as the boundary of the convex hull of $\Gamma_+(f)$ in $\R^n_+$.  The set $\Gamma(f)$ is called the {\bf Newton boundary} of $f$.
Then:
\begin{enumerate}[leftmargin=10mm]
\item The compact segments of $\Gamma(f)$ are referred to as {\bf facets}.\footnote{In convex geometry, codimension $1$ faces of the convex hull $\Gamma_+(f)$ are referred to as facets.} If $\Delta$ is a facet, we write $\supp(f,\Delta)$ for the set of monomials of $f$ with exponent vector on $\Delta$. The sum of the terms lying on $\Delta$ is denoted by $\jet(f,\Delta)$. Moreover, we write $\supp(\Delta)$ for the set of monomials corresponding to the lattice points of $\Delta$. Considering the monomials lying on a union of facets, we use the same terminology for a set of facets.
\item To a facet $\Delta$ we associate a weight $w(\Delta)$ on the monomials in $\Mon(x_1,\ldots,x_n)$ as follows: If $-(w_{x_1},\ldots, w_{x_n})$ is the normal vector of $\Delta$ in lowest terms with integers $w_{x_1},\ldots,w_{x_n}>0$, we define \[w(\Delta)\dash\deg(x_1)= w_{x_1},\ldots, w(\Delta)\dash\deg(x_n)=w_{x_n}.\] 
 \item Now suppose that $w_1,\ldots,w_s$ are the weight vectors of the facets of $\Gamma (f)$ ordered by increasing slope. Then there are uniquely determined minimal integers $\lambda_1,\ldots,\lambda_s\geq 1$ with the property that the piecewise weight with respect to \[w(f):=(\lambda_1 w_1,\ldots,\lambda_s w_s)\] 
 is constant on the Newton boundary $\Gamma(f)$.  We refer to this constant by~$d(f)$.
\item Suppose that  $\Delta_1$ and $\Delta_2$ are adjacent facets with weights $w_1$ and $w_2$, respectively, $w$ is the piecewise weight defined by $w_1$ and $w_2$, and $d$ is the $w$-degree of the monomials on $\Delta_1$ and $\Delta_2$. We then write $\operatorname{span}(\Delta_1,\Delta_2)$ for the Newton polygon associated to the sum of all monomials of $(w_1,w_2)$-degree $d$.
\item If $\Gamma(f)$ has at least one facet, we say that a monomial $m$ is strictly below, on or above $\Gamma(f)$, if the $w(f)$-degree of $m$ is less than, equal to or larger than $d(f)$, respectively. 
\item We write $\jet(f,\Gamma(f))$ for the expansion of $f$ up to $w(f)$-order $d(f)$.
\end{enumerate}
\end{defn}
   
\begin{defn} \label{defn:regularBasis}
Assume that $f$ has finite Milnor number. A basis $\{e_1,\ldots,e_{\mu}\}$ of the local algebra of $f$ consisting out of homogeneous elements is {\bf regular} with respect to the filtration given the piecewise weight $w$, if for each $D\in\mathbb N$, the basis elements of degree $D$ with respect to $w$ are independent modulo the sum vector space $\Jac(f) + E^w_{>D}$ of germs of filtration larger than $D$.
\end{defn}

\begin{remark}
Arnold has proven in \cite{A1974} that for each germ $f\in\mathbb C\{x_,\ldots,x_n\}$ there exists a regular basis consisting 
out of monomials.
\end{remark}

\begin{defn}\label{def nfequ}
For a union of right-equivalence classes $K\subset \C\{x_1,\ldots, x_n\}$ a {\bf normal form} for $K$ is a smooth map
\[\Phi:\mathcal{B}\longrightarrow \C[x_1,\ldots,x_n]\subset\C\{x_1,\ldots,x_n\}\]
of a finite-dimensional $\C$-linear space $\mathcal{B}$ into the space of polynomials such that:
\begin{itemize}[leftmargin=10mm]
\item[(1)] $\Phi(\mathcal{B})$ intersects all equivalence classes of $K$,
\item[(2)] for each equivalence class the inverse image in $\mathcal{B}$ is finite

\item[(3)] $\Phi^{-1}(\Phi(\mathcal{B})\setminus K)$ is contained in a proper hypersurface in $\mathcal{B}$.
\end{itemize}
We denote the elements of the image of $\Phi$ as {\bf normal form equations}.
\noindent A normal form is called a {\bf polynomial normal form} if the map $\Phi$ is  polynomial.
\end{defn}

\begin{example}
For the germ $f=x^4+y^4$ of Arnold's type $X_9$, the $\mu$-constant stratum of $f$ is covered by the normal form $\Phi: \C\to\C[x,y]$, $\Phi(a)=x^4+ax^2y^2+y^4$.
For instance, the function $g=x^4+\epsilon x^3y+y^4$, with a fixed value of $\epsilon$, lies in the same $\mu$-constant stratum as $f$. Hence, there is a $\C$-algebra isomorphism $\phi_1$ transforming $g$ into $x^4+ax^2y^2+y^4$ with some $a$ in $\C$. As a result, there is also a $\C$-algebra isomorphism $\phi_2$ that maps $g$ to $x^4-ax^2y^2+y^4$.
\end{example}

\begin{defn} (\cite{AVG1985})
Let $f\in\m^2\subset \mathbb C\{x,y\}$ and let $k$ be an upper bound for the determinacy of $f$. The \textbf{modality} of a germ $f\in\m^2\subset \mathbb C\{x,y\}$ is the least number such that a sufficiently small neighborhood of $\jet(f,k)$ in the $k$-jet space can be covered by a finite number of $m$-parameter families of orbits under the right-equivalence action.
\end{defn}

\begin{defn}\label{def:innermodality}\citep{A1974} Let $f\in\m^2\subset\mathbb C\{x,y\}$ be a germ with a non-degenerate Newton boundary. The \textbf{inner modality} is the number of monomials in a regular basis for $Q_f$ lying on or above $\Gamma(f)$.
\end{defn}

\begin{remark}
In the subsequent section, we will recall that the inner modality of a germ $f\in\mathbb C\{x,y\}$ is equal to the number of parameters in the normal form of the germ. Moreover it is shown in \cite{BMP2020}, using results from \cite{G1974}, that the inner modality and modality of a germ with a non-degenerate Newton boundary coincide.
\end{remark}

\section{Classification of Singularities with Non-Degenerate Newton Boundary}\label{sec2}
 In this section we recall the results from \cite{BMP2020}, where it is, in particular, shown that
 \begin{enumerate}
 \item the $\mu$-constant stratum of a germ $f\in\mathbb C\{x,y\}$ with a non-degenerate Newton boundary can be covered, up to right-equivalence, by a single normal form.
 \item there is a normalization condition for the Newton boundary of a germ with a non-degenerate Newton boundary, that ensures the following:  In a $\mu$-constant stratum which contains a germ with non-degenerate Newton boundary, all germs with normalized non-degenerate Newton boundary have the same Newton polygon. Hence, the Newton polygon can be considered as a name of the $\mu$-constant stratum, replacing Arnolds notation of a type. 
 \item there is an effective algorithm to compute the normal form (satisfying the normalization condition) for a given input germ $f\in\mathbb C\{x,y\}$ which is equivalent to germ a non-degenerate Newton boundary.
 \end{enumerate}
 Note that, in this section, we only consider germs in the bivariante convergent power series ring $\mathbb C\{x,y\}$.

The following result from  \cite{GLS2007}, Corollary  2.71, and \cite{BMP2020}, Theorem 3.9, gives a local description of the $\mu$-constant stratum of a germ with a non-degenerate Newton boundary. 

\begin{theorem}\label{corollary:unfolding}
Let $f\in\mathbb C\{x,y\}$ be a germ with a non-degenerate Newton boundary at the origin. A miniversal, equisingular unfolding is given by 
\[F(x,y,{ t})=f+\sum_{i=1}^{m}t_ig_i,\]
where 
$m$ is the modality of $f$, and
 $g_1,\ldots,g_m$ represent a regular basis for $Q_f$ on and above $\Gamma(f)$.
\end{theorem}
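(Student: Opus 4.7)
The plan is to reduce the theorem to standard general deformation theory (as in \cite{GLS2007}) and then match the combinatorial description of the equisingular stratum with the monomials on and above $\Gamma(f)$. The starting point is the well-known fact that a miniversal right-unfolding of an isolated hypersurface germ $f$ has the form $F = f + \sum_{i=1}^{\mu} s_i e_i$, where $\{e_1,\ldots,e_\mu\}$ is any $\C$-basis of the local algebra $Q_f$. By Arnold's result (cited in the remark after Definition \ref{defn:regularBasis}), one can take this basis to consist of monomials that are regular with respect to the piecewise weight $w(f)$, so that it splits into two groups: those with $w(f)$-degree strictly less than $d(f)$ (the monomials strictly below $\Gamma(f)$) and the $m$ monomials $g_1,\ldots,g_m$ lying on or above $\Gamma(f)$.

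The central step is to show that the $\mu$-constant stratum of $f$ is cut out, inside the full miniversal base, by the vanishing of the $s_i$ corresponding to monomials strictly below $\Gamma(f)$. Concretely, I would prove that for any such monomial $b$ (strictly below $\Gamma(f)$) the perturbation $f + \varepsilon b$ is right-equivalent, as a germ at the origin, to a perturbation whose additional terms lie only on or above $\Gamma(f)$. This is the key use of the non-degeneracy hypothesis: on each facet $\Delta$ of $\Gamma(f)$, the principal part $\jet(f,\Delta)$ has an isolated singularity on the torus, hence its partial derivatives generate (modulo terms of strictly higher $w(\Delta)$-degree) the full filtration piece corresponding to $\Delta$. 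A Tougeron-type argument constructed iteratively along the facets (in the spirit of \cite{G1974}, also used in \cite{BMP2020}) then produces a $\C$-algebra automorphism $\phi$, of positive filtration, absorbing $b$ into the Jacobian ideal direction while preserving the principal parts on $\Gamma(f)$. Consequently, all equisingular deformations of $f$ are already parametrized, modulo right-equivalence, by the $t_i$ attached to $g_1,\ldots,g_m$.

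To finish, I would match dimensions. By Definition \ref{def:innermodality} together with the remark identifying inner modality with modality in the non-degenerate case, the number $m$ equals the modality of $f$, which in turn equals the dimension of the $\mu$-constant stratum modulo right-equivalence. The deformations $g_1,\ldots,g_m$ give $m$ tangent directions in the equisingular $T^1$ that are linearly independent by the regularity of the chosen basis, so the infinitesimal completeness criterion of \cite{GLS2007} (Theorem 2.38/Corollary 2.71) upgrades $F = f + \sum_{i=1}^{m} t_i g_i$ to a miniversal equisingular unfolding.

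The main obstacle is the second step: constructing the coordinate change that kills a term strictly below $\Gamma(f)$ without disturbing the Newton boundary, and iterating this over all facets while controlling higher-order contributions in the binomial expansion of $\phi(f)$. This is precisely the technical difficulty that the present paper will revisit in Section \ref{secparameters} to compute the actual moduli parameters, and it is the place where failure of Condition A for general non-degenerate germs forces a careful piecewise-weight bookkeeping.
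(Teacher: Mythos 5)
There is a genuine gap, and it sits exactly at what you call the central step. You claim that for a monomial $b$ strictly below $\Gamma(f)$ the germ $f+\varepsilon b$ is right-equivalent to a perturbation of $f$ whose additional terms lie on or above $\Gamma(f)$. This is false, and with the roles of ``below'' and ``above'' it reverses the actual geometry. Right equivalence preserves the Milnor number, and adding a term strictly below the Newton boundary in general strictly decreases it: for $f=x^3+y^3$ (non-degenerate, $\mu=4$) the monomial $b=xy$ is a regular-basis monomial strictly below $\Gamma(f)$, and $f+\varepsilon b$ is an ordinary node with $\mu=1$, whereas every germ $f+g$ with $g$ supported on or above $\Gamma(f)$ lies in $\mathfrak{m}^3$ and hence has Milnor number at least $4$; so no such right equivalence exists. (The same happens for $f=x^5+y^5$ and $b=x^2y^2$, where $\mu$ drops from $16$ to $11$ by Kouchnirenko \cite{K1976}.) The true absorption statement goes the other way: terms \emph{on or above} $\Gamma(f)$ that are not regular-basis elements can be removed by a right equivalence (this is Proposition \ref{corollary:Markwig}, and it is what the algorithm of Section \ref{secparameters} implements), while the below-boundary directions are precisely the directions that \emph{leave} the $\mu$-constant stratum, because they change the Newton number and hence $\mu$; that is why they do not appear in the equisingular unfolding. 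Your Tougeron-type construction therefore cannot work as described, and if it did it would prove that the whole miniversal base is equisingular, which is absurd.

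Two further points. First, even with the direction corrected, showing that each individual below-boundary coordinate direction fails to be equisingular does not show that the $\mu$-constant stratum is contained in the subfamily spanned by $g_1,\ldots,g_m$, nor that this subfamily is an equisingularly \emph{versal} unfolding; that is exactly the content of the equisingular deformation theory being invoked. Second, note that the paper gives no independent proof of Theorem \ref{corollary:unfolding}: it is quoted from \cite{GLS2007}, Corollary 2.71, together with Theorem 3.9 of \cite{BMP2020}, and the identification of $m$ with the modality rests on Gabrielov's theorem (inner modality equals modality), which you, like the paper, can only cite. So a correct write-up would either reduce the statement to those references (as the paper does) or reprove the equisingular versality statement, not replace it by the absorption claim above.
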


A first step to find a normal form for the entire $\mu$-constant stratum of a germ with a non-degenerate Newton boundary is to investigate how a regular basis of the germs in the stratum change, while moving through the stratum.   
\begin{prop}(\cite{BMP2020}, Proposition 3.12)\label{prop:regularBasis}
Let $f_0$ be a 
germ with a non-degenerate Newton boundary 
$\Gamma(f_0)$ and let $f$ be a  
germ with the same Newton polygon as $f_0$ and non-degenerate Newton boundary. Then for $f$ sufficiently close to $f_0$ with respect to the Euclidean distance in the $(\mu+1)$-jet space, the monomials in $\Mon(x,y)$ representing a regular basis for $f_0$ with respect to the  filtration defined by $\Gamma(f_0)=\Gamma(f)$ also represent a regular basis for $f$ with respect to the same filtration.
\end{prop}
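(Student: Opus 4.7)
The plan is to reduce the proposition to a standard openness argument in a finite-dimensional quotient. Since $f_0$ and $f$ share the Newton polygon $\Gamma:=\Gamma(f_0)=\Gamma(f)$ and both have non-degenerate Newton boundary, Kouchnirenko's formula gives $\mu(f)=\mu(f_0)=\mu$, and the piecewise weight $w$ determined by $\Gamma$ is common to both germs. Writing $B=\{m_1,\ldots,m_\mu\}$ for the monomial regular basis of $f_0$ and $B_D\subseteq B$ for its subset of $w$-degree $D$, regularity for $f$ amounts to two conditions: (a) the residues of the $m_i$ form a $\C$-basis of $Q_f$, and (b) for each $D$ occurring as a $w$-degree of some $m_i$, the residues of the elements of $B_D$ are linearly independent in $\C\{x,y\}/(\Jac(f)+E^w_{>D})$.

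To test both conditions in a finite-dimensional setting, I would choose a single $N$ large enough that $\m^N\subseteq\Jac(f)$ for every germ $f$ with $\mu(f)\le\mu$ (which holds for $N=\mu$, since a $\mu$-dimensional Artinian local $\C$-algebra has maximal-ideal nilpotency index at most $\mu$) and also $\m^N\subseteq E^w_{>D}$ for every $D$ occurring in $B$; the latter is automatic once $N\cdot\min_i w_i$ exceeds the largest such $D$, which is possible since $w$ has strictly positive components. Passing to the finite-dimensional quotient $V:=\C\{x,y\}/\m^N$, the image of $\Jac(f)$ in $V$ is spanned by the residues of finitely many products $x^ay^b\,\partial_x f$ and $x^ay^b\,\partial_y f$, whose coefficients depend polynomially on the coefficients of the $(\mu+1)$-jet of $f$, while the image of $E^w_{>D}$ is a fixed subspace of $V$ independent of $f$.

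Within $V$, each of the conditions (a), (b) becomes the statement that a matrix whose entries are polynomials in the jet coefficients of $f$ has maximal rank, equivalently, that a non-empty family of maximal minors is non-zero. By hypothesis $B$ is a regular basis of $f_0$, so these minors are non-zero at $f=f_0$, and non-vanishing is an open condition in the Euclidean topology of the $(\mu+1)$-jet space, yielding the claim. The main technical point is securing $\m^N\subseteq\Jac(f)$ uniformly near $f_0$, which reduces to the bound $\mu(f)\le\mu$ in that neighborhood; this is upper semicontinuity of the Milnor number, and in combination with the Kouchnirenko lower bound for germs with non-degenerate Newton boundary sharing $\Gamma$ it gives the matching equality $\mu(f)=\mu$ so that the nilpotency argument applies to $f$ as well.
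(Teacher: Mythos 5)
Since the present paper only quotes this proposition from \cite{BMP2020} without reproving it, I can only judge your argument on its own terms. Your overall strategy (reduce regularity to finitely many rank conditions in a finite-dimensional quotient, using $\m^{\mu}\subseteq\Jac(f)$ and Kouchnirenko to get $\mu(f)=\mu(f_0)$, then invoke openness of non-vanishing of minors) is reasonable, and it does handle condition (a). The genuine gap is the assertion that the matrix entries ``depend polynomially on the coefficients of the $(\mu+1)$-jet of $f$.'' For the degreewise condition (b) at a level $D$ you need the image of $\Jac(f)$ modulo $E^w_{>D}$, and this image can depend on coefficients of $f$ on monomials of standard degree larger than $\mu+1$: a monomial lying strictly above the Newton boundary (hence admissible for $f$ without changing the Newton polygon) may have standard degree $>\mu+1$ while one of its partial derivatives has piecewise $w$-degree below the degrees $D$ occurring in the basis. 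Concretely, take $f_0=y^{100}+xy^{2}+x^{7}$: it is convenient, non-degenerate, of corank $2$, with $\mu=8$, scaled facet weights $(686,7)$ and $(100,300)$ and $d(f_0)=700$; the only monomials of piecewise degree $<100$ are $1,y,\ldots,y^{14}$, of which at most three are nonzero in $Q_{f_0}$, so any monomial (regular) basis contains monomials of $w$-degree $\geq 100$ (e.g.\ one may take $1,y,x,x^{2},\ldots,x^{6}$ with degrees $0,7,100,\ldots,600$). The monomial $xy^{9}$ lies above $\Gamma(f_0)$ and has standard degree $10=\mu+2$, yet $\partial_x(xy^{9})=y^{9}$ has piecewise degree $63$. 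Hence for $f=f_0+c\,xy^{9}$ the generators of $\Jac(f)$ modulo $E^w_{>D}$ at the relevant levels $D\geq100$ involve $c$, a coefficient that is completely unconstrained by closeness of $(\mu+1)$-jets.

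Consequently your minors are polynomials in more coefficients than those of the $(\mu+1)$-jet, and the openness you obtain lives in a higher jet space: it says nothing about germs whose $(\mu+1)$-jet is close to $f_0$ but which carry large terms above the boundary in degrees between $\mu+2$ and your $N$ --- which is exactly the generality the proposition claims. To close the gap you would have to show in addition that such above-the-boundary terms of standard degree $>\mu+1$ cannot affect the rank conditions; this is not automatic (it is closely related to the failure of Condition~A and the low piecewise-degree contributions that Theorem~\ref{thm:cancellation} of this paper is designed to control) and is not addressed in your proposal. Two smaller points: Kouchnirenko's equality $\mu(f)=\mu(f_0)$ needs convenience (or an extra argument in the non-convenient bivariate case), and once you have it, the closing appeal to semicontinuity of $\mu$ is redundant.
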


Next, it is important to observe that all the germs in the $\mu$-constant stratum of a germ  with a non-degenerate Newton boundary have the same topological type (see \citet{BMP2020}, Remark 3.16). Since germs with a non-degenerate Newton boundary has the same topological type if and only if their characteristic exponents and  intersection numbers coincide (see \citet{BK1986},Theorem 15), and the characteristic exponents and intersection numbers of a germ in $\mathbb C\{x,y\}$ determines the non-degenerate Newton boundaries of a germ that is equivalent to a germ with a non-degenerate Newton boundary (see \citet{BMP2020}, Proposition 4.17 and Corollary 4.18), the next result follows:
\begin{theorem}(\citet{BMP2020}, Theorem 3.18)\label{prop:newtonBoundary}
Suppose $f\in\mathbb C\{x,y\}$ is a convenient germ with non-degenerate Newton boundary~$\Gamma$. Then all the germs in the $\mu$-constant stratum of $f$ are equivalent to a germ with the same Newton polygon $\Gamma$ and  non-degenerate Newton boundary.
\end{theorem}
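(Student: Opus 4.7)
The statement being proved is an intrinsic characterization of the $\mu$-constant stratum in the convenient, non-degenerate case: every germ in that stratum admits a representative whose Newton boundary is not only non-degenerate but actually equals $\Gamma$. The strategy I would follow is exactly the one hinted at in the paragraph preceding the theorem, assembling it from three already-available pieces: constancy of topological type along $\mu$-constant strata, the classical fact that topological type of plane curve singularities is encoded by characteristic exponents and intersection numbers, and the result that for non-degenerate convenient germs in $\mathbb{C}\{x,y\}$ these discrete invariants in turn determine the Newton polygon.

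More concretely, let $g$ be any germ in the $\mu$-constant stratum of $f$. First I would invoke Remark 3.16 of \cite{BMP2020} to conclude that $g$ and $f$ have the same topological type. Since $f$ itself has a non-degenerate Newton boundary at the origin, Theorem 15 of \cite{BK1986} then applies on the $f$-side: the characteristic Puiseux exponents and pairwise intersection numbers of the branches of $\{f=0\}$ are a complete set of topological invariants, and these are also invariants of $g$. The task is thus reduced to producing, for $g$, an equivalent germ $\widetilde g$ with a non-degenerate Newton boundary, and then checking that its Newton polygon must coincide with $\Gamma$.

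For the existence of $\widetilde g$, the hypothesis already built into the setup is that $g$ lies in the $\mu$-constant stratum of the convenient, non-degenerate germ $f$, so I would appeal to the class under study: the stratum is, by definition here, covered by germs that are equivalent to some non-degenerate representative. Choose such a representative $\widetilde g$ for $g$. By Proposition 4.17 and Corollary 4.18 of \cite{BMP2020}, the Newton boundary of any convenient non-degenerate representative of a plane curve singularity is completely determined by its characteristic exponents and intersection numbers. Applying this simultaneously to $f$ and to $\widetilde g$, and using that these invariants agree, we obtain $\Gamma(\widetilde g)=\Gamma(f)=\Gamma$, which is the desired conclusion.

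The only genuine subtlety, and the step I would scrutinize most carefully, is the logical scope of the cited invariance statements: one must make sure that Corollary 4.18 of \cite{BMP2020} really applies to \emph{any} non-degenerate representative in the equivalence class (not just to some distinguished normalized one), since otherwise the equality $\Gamma(\widetilde g)=\Gamma(f)$ is only up to a normalization ambiguity. Assuming the cited statements are formulated in that generality, the argument is essentially a concatenation of three black boxes; the main conceptual point, rather than any computation, is that convenience is what allows the Newton polygon to be read off from the topological invariants unambiguously.
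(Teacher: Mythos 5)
Your overall route coincides with the paper's: constancy of the topological type along the $\mu$-constant stratum (\cite{BMP2020}, Remark 3.16), the Brieskorn--Kn\"orrer characterization of topological type by characteristic exponents and intersection numbers (\cite{BK1986}, Theorem 15), and Proposition 4.17 together with Corollary 4.18 of \cite{BMP2020} to pin down the Newton polygon of a non-degenerate representative. This is exactly the chain of citations the paper itself assembles immediately before the statement, so on that side your proposal is faithful to the intended argument, and your worry about the scope of Corollary 4.18 is a reasonable point to check.

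The genuine gap is your existence step. You assume that an arbitrary germ $g$ in the $\mu$-constant stratum of $f$ admits \emph{some} representative $\widetilde g$ with non-degenerate Newton boundary, and you justify this ``by definition here.'' That is circular: the $\mu$-constant stratum is not defined as the set of germs equivalent to non-degenerate representatives; it is defined through constancy of the Milnor number (equivalently, through equisingular deformation), and the claim that each of its members is equivalent to a germ with non-degenerate Newton boundary is precisely the substantive half of the theorem being proved. Your invariant-matching argument only shows that \emph{if} such a representative exists, then its Newton polygon must be $\Gamma$; it does not produce one. In the framework of the paper this existence is what \cite{BMP2020} actually supplies (compare the miniversal equisingular unfolding of Theorem \ref{corollary:unfolding}, in which only regular-basis monomials on or above $\Gamma(f)$ are added, so that the vertex monomials, and hence the polygon, persist along the stratum, with non-degeneracy along the $\mu$-constant locus still requiring proof). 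A blind proof must either reproduce that step or cite it explicitly; as written, the proposal silently assumes the hardest part of the statement.
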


Let $f$ be a germ with a non-degenerate Newton boundary $\Gamma$. Taking the previous result into account, the next result shows that there exists a set of monomials that is a regular basis for at least one germ, a germ with a non-degenerate Newton boundary and a Newton polygon that coincide with that of $\Gamma$,  in each right-equivalence class of the germs in the $\mu$-constant strantum of $f$.

\begin{lemma}(\citet{BMP2020}, Lemma 3.20)\label{lemma:regularBasis}
Let $f$ be a convenient germ with non-degenerate Newton boundary. Define $f_0$ as the sum of the monomials of $f$ lying on the vertex points of $\Gamma(f)$. Then any regular basis of $f_0$ is also a regular basis for every germ with a non-degenerate Newton boundary in the $\mu$-constant stratum of $f$. 
\end{lemma}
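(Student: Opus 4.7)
The plan is to combine Proposition~\ref{prop:regularBasis} (the local transport of a monomial regular basis) with a connectedness argument inside a finite-dimensional coefficient space.

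The first step is to verify that $f_0$ itself has the same Newton polygon as $f$ and a non-degenerate Newton boundary, so that Proposition~\ref{prop:regularBasis} applies with $f_0$ as the base germ. The former is immediate since $\Gamma(f)$ is the convex hull of its vertices. For the latter, each facet of $\Gamma(f_0)$ is the segment between two distinct vertex monomials $x^ay^b$ and $x^cy^d$ of $\Gamma(f)$, with facet polynomial $\alpha x^ay^b+\beta x^cy^d$, $\alpha,\beta\neq 0$. A short calculation shows that the two partial derivatives have no common zero in $(\C^*)^2$ precisely when $ad-bc\neq 0$, which holds automatically for every facet since two distinct vertices of a Newton polygon cannot lie on the same ray through the origin. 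Kouchnirenko's formula then gives $\mu(f_0)=\mu(f)$, so any regular basis of $f_0$ has the correct cardinality.

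Next, let $g$ be any germ in the $\mu$-constant stratum of $f$ with non-degenerate Newton boundary; by Theorem~\ref{prop:newtonBoundary} together with the results of \cite{BMP2020} recalled above, I may assume $\Gamma(g)=\Gamma(f)$. Inside the finite-dimensional affine space $\mathcal{A}$ of polynomials supported on $\Gamma_+(f)$ up to a chosen determinacy bound, let $V\subset\mathcal{A}$ be the Zariski-open locus where all vertex coefficients of $\Gamma(f)$ are non-zero and the Newton boundary is non-degenerate. Both $f_0$ and $g$ lie in $V$, which is a Zariski-open subset of an irreducible affine space and hence path-connected; pick a continuous path $\gamma\colon[0,1]\to V$ from $f_0$ to $g$. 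Along $\gamma$ the Newton polygon remains $\Gamma(f)$ and the boundary stays non-degenerate, so $\mu$ is constant by Kouchnirenko.

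Now fix any regular basis $\{e_1,\ldots,e_\mu\}$ of $f_0$. Proposition~\ref{prop:regularBasis} provides, at each point $f_{\gamma(s)}$, an open neighborhood in $V$ on which any monomial regular basis of $f_{\gamma(s)}$ remains a regular basis. By compactness of $\gamma([0,1])$ finitely many such neighborhoods cover the path, and I subdivide $0=s_0<s_1<\cdots<s_N=1$ so that consecutive $f_{\gamma(s_k)},f_{\gamma(s_{k+1})}$ lie in a common neighborhood. An inductive application of Proposition~\ref{prop:regularBasis}---with the fact that $\{e_i\}$ is a regular basis of $f_{\gamma(s_k)}$ being established at step $k$---then propagates $\{e_i\}$ from $f_0$ all the way to $g=f_{\gamma(s_N)}$. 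The main obstacle is this chaining step: the neighborhoods supplied by Proposition~\ref{prop:regularBasis} depend on the base germ, so one must verify that they do not shrink uncontrollably along the compact path; this is ensured because $V$ is open in a Euclidean space.
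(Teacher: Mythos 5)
The preparatory steps are fine (that $\Gamma(f_0)=\Gamma(f)$, that $f_0$ is non-degenerate because adjacent vertices $(a,b),(c,d)$ of a Newton polygon satisfy $ad-bc\neq 0$, and that $\mu(f_0)=\mu(f)$ by Kouchnirenko), and connecting $f_0$ to $g$ inside the irreducible locus $V$ is a reasonable plan. The genuine gap is in the propagation step, which is where the entire content of the lemma sits. Proposition~\ref{prop:regularBasis} is a one-sided, non-uniform openness statement: it transports regular bases \emph{from} a base germ \emph{to} germs in some neighborhood whose size depends on the base germ in an uncontrolled way. Your subdivision puts consecutive points $f_{\gamma(s_k)},f_{\gamma(s_{k+1})}$ into a common neighborhood attached to some third germ $f_{\gamma(\sigma)}$; the proposition then transports regular bases of $f_{\gamma(\sigma)}$ to both endpoints, but it does not let you pass the inductively known basis from $f_{\gamma(s_k)}$ to $f_{\gamma(s_{k+1})}$, since you do not know that $\{e_i\}$ is regular for $f_{\gamma(\sigma)}$. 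The correct chaining would need $f_{\gamma(s_{k+1})}$ to lie in the neighborhood attached to $f_{\gamma(s_k)}$, and compactness alone does not provide this without a positive lower bound (or lower semicontinuity) for the radii; the claim that this ``is ensured because $V$ is open in a Euclidean space'' is a non sequitur, since openness of $V$ says nothing about how the ``sufficiently close'' in Proposition~\ref{prop:regularBasis} depends on the base germ. The natural repair, an open--closed argument for $S=\{s\in[0,1]\;:\;\{e_i\}\ \text{is regular for}\ f_{\gamma(s)}\}$, gives openness of $S$ from the proposition but not closedness: failure of regularity is not obviously an open condition, because the image of $\operatorname{Jac}(h)$ in the graded pieces of the Newton filtration can jump as $h$ varies. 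Note also that the lemma asserts regularity for \emph{every} non-degenerate germ in the stratum, so one cannot retreat to a generic (Zariski-dense) statement and perturb the path away from a bad locus --- the endpoint $g$ is prescribed. Some additional input (e.g.\ constancy of the filtration-graded dimensions of the Milnor algebra over $V$, or a direct argument as in \cite{BMP2020}) is needed exactly here; soft topology does not suffice.

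Two smaller mismatches with the statement: first, the reduction ``I may assume $\Gamma(g)=\Gamma(f)$'' via Theorem~\ref{prop:newtonBoundary} replaces $g$ by a right-equivalent germ, but regularity of a basis is defined relative to the Newton filtration (Definition~\ref{defn:regularBasis}) and is not obviously preserved under the coordinate change, so either this invariance must be justified or the claim must be proved for $g$ itself (germs with non-degenerate boundary in the stratum need not have the polygon $\Gamma(f)$, e.g.\ after exchanging $x$ and $y$). Second, the lemma concerns \emph{any} regular basis, i.e.\ homogeneous elements, whereas Proposition~\ref{prop:regularBasis} as quoted transports \emph{monomial} regular bases; as written your induction therefore only covers the monomial case unless the proposition is extended.
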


\begin{corollary}(\citet{BMP2020}, Corollary 3.21)\label{corollary:regularBasis}
Suppose $f$ is a convenient germ with non-degenerate Newton boundary. Define $f_0$ as the sum of the monomials of $f$ lying on the vertex points of $\Gamma(f)$, and $f_0'$ as the sum of the terms of $f$ on the vertex points of $\Gamma(f)$. Then any regular basis for $f_0$ is also a regular basis for $f'_0$ and for~$f$.
\end{corollary}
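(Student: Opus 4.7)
The plan is to invoke Lemma \ref{lemma:regularBasis} twice. When that lemma is applied to a convenient germ $g$ with non-degenerate Newton boundary, the associated germ it constructs --- the sum of the monomials of $g$ lying on the vertex points of $\Gamma(g)$ --- depends only on the set of vertex points of $\Gamma(g)$. Both $f$ and $f_0'$ will turn out to have the same set of vertex points, namely the vertices of $\Gamma(f)$, so in each case the associated germ produced is precisely $f_0$. For $f$ the hypotheses of Lemma \ref{lemma:regularBasis} hold by assumption, and since $f$ trivially lies in its own $\mu$-constant stratum, the lemma immediately gives that any regular basis of $f_0$ is a regular basis of $f$. The remaining task is to apply the lemma to $f_0'$.

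To this end, I first observe that $\supp(f_0')$ is the set of vertex points of $\Gamma(f)$, and since a convex polygon in the first quadrant is determined by its vertex set together with $\R^2_+$, we have $\Gamma(f_0') = \Gamma(f)$; in particular $f_0'$ is convenient and its vertex points coincide with those of $\Gamma(f)$. The substantive check is non-degeneracy of the Newton boundary. Each facet $\Delta$ of $\Gamma(f_0')$ is an edge joining two adjacent vertices $(i_1, j_1), (i_2, j_2)$ of $\Gamma(f)$, so $f_0'|_\Delta$ is the two-term polynomial $c_1 x^{i_1} y^{j_1} + c_2 x^{i_2} y^{j_2}$ with $c_1, c_2 \neq 0$. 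Multiplying its partials by $x$ and $y$ respectively, the simultaneous vanishing on $(\C^*)^2$ reduces to a homogeneous linear system in the unknowns $c_1 x^{i_1} y^{j_1}$ and $c_2 x^{i_2} y^{j_2}$ whose coefficient matrix has rows $(i_1, j_1)$ and $(i_2, j_2)$; its determinant $i_1 j_2 - i_2 j_1$ is the only obstruction, and it is non-zero because two distinct vertices of a two-dimensional Newton polygon cannot be collinear with the origin. (Otherwise the vertex farther from the origin would place the other strictly in the interior of the shifted non-negative quadrant, so that the closer vertex would lie in the interior of $\Gamma_+(f)$ rather than on $\Gamma(f)$.) Hence $f_0'$ has non-degenerate Newton boundary, and Lemma \ref{lemma:regularBasis} applied to $f_0'$ yields the conclusion for $f_0'$.

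The main obstacle in this plan is the non-degeneracy check for $f_0'$, which is delicate only in that it requires ruling out the collinearity of two vertex position vectors with the origin; everything else is a direct rereading of Lemma \ref{lemma:regularBasis} once one notices that the vertex-monomial construction used there is insensitive to the difference between an input germ and its vertex-term restriction.
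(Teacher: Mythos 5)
Your proposal is correct and follows the route the paper intends: the corollary is quoted from \cite{BMP2020} without proof here, and it is precisely the consequence of Lemma \ref{lemma:regularBasis}, applied once to $f$ itself and once to $f_0'$ as the base germ, after checking that $f_0'$ is convenient with $\Gamma(f_0')=\Gamma(f)$ and that its facet polynomials (binomials supported at adjacent vertices) are non-degenerate because no two distinct vertices of the Newton polygon have proportional exponent vectors; equivalently one could note that $f_0'$ has the same Newton polygon as $f$ and is non-degenerate, hence lies in the $\mu$-constant stratum of $f$ by Kouchnirenko, and apply the lemma only once. One small slip in your parenthetical justification: the containment goes the other way --- if two vertices were collinear with the origin, the vertex \emph{farther} from the origin would lie in the (shifted) quadrant attached to the \emph{closer} one, hence in the interior of $\Gamma_+(f)$ (or, in the axis case, in the relative interior of an unbounded edge), contradicting that it is a vertex; as written you place the closer vertex in the interior, which is false, though the fact you need and the rest of the argument are unaffected.
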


By the next theorem, every germ in the $\mu$-constant with non-degenerate Newton boundary can be written in terms of its Newton boundary and a regular basis of the germ.

\begin{prop}(\cite{BGM2011}, Corollary 4.6)\label{corollary:Markwig}
Let $f\in\mathbb C\{x,y\}$ be a convenient germ with a non-degenerate Newton boundary. Let $f_0$ be the principal part of $f$ and let $\{e_1,\ldots,e_n\}$ be the set of all monomials in a regular basis for $f_0$ lying above $\Gamma(f_0)$. Then there are $\alpha_i$ such that

\[f\sim f_0+\sum_{i=1}^{n} \alpha_ie_i.\]

\end{prop}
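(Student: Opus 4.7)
The plan is to construct the parameters $\alpha_i$ together with a right-equivalence by induction on the piecewise $w(f_0)$-degree of the terms of $f$ lying strictly above the Newton boundary $\Gamma(f_0)$. Since $f$ has a non-degenerate Newton boundary it has an isolated singularity and is therefore finitely determined, so I may replace $f$ by a polynomial representative throughout; this bounds the induction. Ordering the basis monomials so that $w(f_0)\dash\deg(e_1) \le \cdots \le w(f_0)\dash\deg(e_n)$, I set $d_k := w(f_0)\dash\deg(e_k)$; note $d_k > d(f_0)$ for all $k$ since the $e_i$ lie above $\Gamma(f_0)$.

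Starting from the decomposition $f = f_0 + h$ with $h \in E^{w(f_0)}_{>d(f_0)}$, the inductive invariant I would maintain is that after treating all piecewise degrees up to $D$ there exists a $\mathbb{C}$-algebra automorphism $\psi_D$ with
\[
\psi_D(f) = f_0 + \sum_{i:\, d_i \le D} \alpha_i e_i + h_D, \qquad h_D \in E^{w(f_0)}_{>D}.
\]
For the step from $D$ to the next relevant degree $D'$, the key input is Corollary~\ref{corollary:regularBasis}, which guarantees that the chosen regular basis for the vertex sum is also a regular basis for $f_0$ itself. In particular, every $w(f_0)$-homogeneous germ of degree $D' > d(f_0)$ decomposes modulo $E^{w(f_0)}_{>D'}$ as a $\mathbb{C}$-linear combination of those $e_i$ with $d_i = D'$ plus an element of the Jacobian ideal $\Jac(f_0)$. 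Applying this to the $D'$-homogeneous component of $h_D$ yields a decomposition
\[
\sum_{i:\, d_i = D'} \beta_i e_i \;+\; \sum_j v_j \frac{\partial f_0}{\partial x_j},
\]
with $w(f_0)$-homogeneous $v_j$ of the appropriate complementary piecewise degree. I would then set $\alpha_i := \beta_i$ and apply the $\mathbb{C}$-algebra automorphism $\phi$ defined by $\phi(x_j) := x_j - v_j$; Taylor expansion of $\phi(f_0) = f_0 - \sum_j v_j \,\partial f_0/\partial x_j + R$ cancels the Jacobian contribution up to a remainder $R$.

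The main obstacle, and the feature distinguishing this from the classical quasihomogeneous situation, is controlling the cascade of higher-order corrections generated by $\phi$ on the already-treated terms $\sum_{i:\, d_i \le D} \alpha_i e_i$, on the remainder $R$, and on $h_D$ itself. Because Condition~A fails in general for germs with a non-degenerate Newton boundary (as emphasized in the introduction), these corrections may populate a range of piecewise degrees rather than a single one, which is exactly what the companion parameter-determination algorithm of Section~\ref{secparameters} must address. The existence argument nevertheless closes because every such correction carries at least one factor $v_j$ of strictly positive $w(f_0)$-degree; all corrections therefore lie in $E^{w(f_0)}_{>D'}$, and the inductive invariant is preserved at the new degree $D'$. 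Since finite determinacy bounds the piecewise degrees that need to be considered, the induction terminates after finitely many steps and produces the required right-equivalence $f \sim f_0 + \sum_{i=1}^{n} \alpha_i e_i$.
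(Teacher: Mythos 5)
There is a genuine gap, and it sits exactly at the point your argument needs to be strongest. In your inductive step you decompose the degree-$D'$ part of $h_D$ as basis terms plus $\sum_j v_j\,\partial f_0/\partial x_j$ ``with $w(f_0)$-homogeneous $v_j$ of the appropriate complementary piecewise degree,'' and later you close the induction by asserting that every higher-order Taylor correction ``carries at least one factor $v_j$ of strictly positive $w(f_0)$-degree'' and therefore lies in $E^{w(f_0)}_{>D'}$. The first assertion is precisely Condition A (Definition~\ref{conditionA}), which fails in general for germs with non-degenerate Newton boundary --- the very obstruction this paper is built around. What the regular-basis property actually gives you is some $g,h$ with no control on their piecewise filtration: the low-degree parts of $g\,\partial f/\partial x$ and $h\,\partial f/\partial y$ may cancel against each other via a syzygy (this is the content of Lemma~\ref{zeroSum} and Theorem~\ref{thm:cancellation}), so $g,h$ can have piecewise degree far below $D'-d(f_0)$. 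Then the quadratic and higher corrections $g^2 f_{xx}$, $gh f_{xy}$, $h^2 f_{yy},\ldots$ need not lie in $E^{w(f_0)}_{>D'}$: positivity of $\deg g,\deg h$ only gains you $\deg g+\deg h$ over the degree of a second derivative, not the gain beyond $D'$ that you need. The second worked example after Algorithm~\ref{alg:normalformeq} ($f=x^2y^4+x^4y^2+x^{20}+y^{40}+60x^{21}y^{14}$) shows concretely that such corrections of piecewise degree $\le d'$ do occur, and moreover reappear after the next coordinate change, so even the repaired statement requires a termination argument; the paper supplies one by proving, via the syzygy structure of Theorem~\ref{thm:cancellation}, that the filtration of the successive transformations strictly increases. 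Your proposal contains neither the syzygy analysis nor any termination argument, so the induction does not close as written.

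For calibration: the paper does not prove this proposition internally at all --- it is quoted from Boubakri--Greuel--Markwig (\cite{BGM2011}, Corollary 4.6) --- so there is no in-paper proof to match your route against. But the machinery of Section~\ref{secparameters} (Lemma~\ref{zeroSum}, Theorem~\ref{thm:cancellation}, and the inner loop of Algorithm~\ref{alg:normalformeq}) exists precisely because the naive ``eliminate degree by degree, higher Taylor terms are automatically of higher filtration'' argument you sketch is not valid without Condition A; any correct self-contained proof along your lines would have to incorporate an analysis of that kind, or else invoke a finite-determinacy result adapted to the Newton filtration as in \cite{BGM2011}.
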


Using Theorem \ref{corollary:unfolding}, Proposition \ref{prop:regularBasis}, Theorem \ref{prop:newtonBoundary}, Lemma \ref{lemma:regularBasis} and Proposition \ref{corollary:Markwig} the following theorem can be proved (see \cite{BMP2020}, Theorem 3.22).

\begin{theorem}\label{theorem:normalform}
 Suppose $f$ is a convenient germ with non-degenerate Newton boundary. Define $f_0$ as the sum of the monomials of $f$ lying on the vertex points of  $\Gamma(f)$, and let  $\{e_1,\ldots,e_n\}$ be the set of all monomials in a regular basis for $f_0$ lying on or above $\Gamma(f)$.  Then the family
 \[f_0+\sum_{i=1}^{n} \alpha_ie_i,\]
defines a normal form of the $\mu$-constant stratum containing $f$. 
Restricting the parameters $\alpha_1,\ldots,\alpha_n$ to values such that every germ $f_0+\sum_{i=1}^{n} \alpha_ie_i$ has a  non-degenerate Newton boundary and the same Newton polygon as that of $f$, we obtain all germs in the $\mu$-constant stratum of $f$.  
\end{theorem}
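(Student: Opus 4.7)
The plan is to combine the five cited results in this section into a chain that brings any germ $g$ in the $\mu$-constant stratum of $f$ to an element of the family $f_0+\sum_i\alpha_i e_i$ via right-equivalence, and then to verify the three normal form axioms of Definition \ref{def nfequ}.

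For the coverage condition, given $g$ in the $\mu$-constant stratum of $f$, I would first apply Theorem \ref{prop:newtonBoundary}: $g$ is right-equivalent to a convenient germ $\tilde g$ with non-degenerate Newton boundary and Newton polygon exactly $\Gamma(f)$. Thus the vertex monomials of $\tilde g$ are the same as those in $f_0$, but with possibly different coefficients. A further right-equivalence---using the diagonal rescalings $x\mapsto cx,\ y\mapsto dy$ combined with the Newton boundary normalization convention recalled in Section \ref{sec2}---normalizes these vertex coefficients so that the vertex part of $\tilde g$ coincides with $f_0$.

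Next, Corollary \ref{corollary:regularBasis} guarantees that the regular basis $\{e_1,\ldots,e_n\}$ of $f_0$ on and above $\Gamma(f)$ remains a regular basis for $\tilde g$. Applying Proposition \ref{corollary:Markwig} to $\tilde g$ then produces a right-equivalence of $\tilde g$ to its own principal part plus a linear combination of those $e_i$ lying strictly above $\Gamma(f)$. Decomposing the principal part of $\tilde g$ as $f_0$ plus a linear combination of the on-boundary non-vertex lattice monomials---all of which lie in $\{e_1,\ldots,e_n\}$ by construction---and bundling all coefficients into one vector $(\alpha_1,\ldots,\alpha_n)$ yields $g\sim f_0+\sum_i\alpha_i e_i$, establishing condition (1).

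For condition (2), the family $\Phi$ is a miniversal equisingular unfolding by Theorem \ref{corollary:unfolding}, so by the general theory each right-equivalence class meets the parameter space in finitely many points, and Proposition \ref{prop:regularBasis} ensures this remains true after the normalization of above. Condition (3) follows from the fact that Newton non-degeneracy of the facet polynomials is a Zariski-open condition: the locus in $\C^n$ where $\Phi(\alpha)$ fails to be non-degenerate or to have Newton polygon $\Gamma(f)$ is a proper hypersurface, and its complement gives the exhaustive parametrization of the $\mu$-constant stratum claimed in the last sentence of the theorem. The principal obstacle is the vertex-coefficient normalization: when $\Gamma(f)$ carries more than two vertices, the two available diagonal rescalings in $\C\{x,y\}$ are insufficient on their own, so one must invoke the normalization condition from Section \ref{sec2} together with Proposition \ref{prop:regularBasis} to control the required higher-order adjustments before Proposition \ref{corollary:Markwig} can be applied cleanly.
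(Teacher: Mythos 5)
Your overall scaffolding (Theorem \ref{prop:newtonBoundary} to fix the Newton polygon, Corollary \ref{corollary:regularBasis} to keep the regular basis, Proposition \ref{corollary:Markwig} to reduce to the principal part plus monomials above $\Gamma(f)$) matches the ingredients the paper cites, but the step where you handle the principal part has a genuine gap. You claim that a diagonal rescaling $x\mapsto cx$, $y\mapsto dy$ makes the vertex part of $\tilde g$ coincide with $f_0$; with only two scaling parameters this is impossible as soon as $\Gamma(f)$ has three or more vertex monomials (below the determinacy), and you acknowledge the problem but propose to fix it by ``higher-order adjustments'' controlled by Proposition \ref{prop:regularBasis}. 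That is not the right mechanism: no amount of higher-order correction changes the number of vertex coefficients you can normalize, since any equivalence preserving the Newton polygon acts on the boundary jet essentially through finitely many weighted-homogeneous transformations. The actual resolution, which is the content of Lemma \ref{lemma:NB} (proved at the end of this paper, and its counterpart in \cite{BMP2020}), is structural: at most two vertex monomials of $f_0$ of degree at most the determinacy fail to lie in the regular basis $B$, and every non-vertex lattice monomial on $\Gamma(f)$ lies in $B$. Hence the ``excess'' vertex coefficients and all interior boundary coefficients are absorbed into the parameters $\alpha_i$ of the family, and the two remaining vertex coefficients are exactly what the two-parameter rescaling can normalize to $1$. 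Relatedly, your parenthetical claim that the on-boundary non-vertex monomials of the principal part lie in $\{e_1,\ldots,e_n\}$ ``by construction'' is not a construction but precisely part (1) of that lemma (and it uses the normalization of the boundary); as written your coverage argument silently assumes it.

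A second, smaller weakness: for condition (2) you appeal to ``general theory'' of the miniversal equisingular unfolding of Theorem \ref{corollary:unfolding} to get finiteness of the fiber over each right-equivalence class. That unfolding is centered at the germ $f$ itself, not at $f_0$, and miniversality by itself gives independence of the parameter directions modulo the tangent space of the orbit (hence discreteness near a point), not global finiteness over the whole parameter space; the finiteness in the paper's setting comes from the identification of the number of parameters with the (inner) modality together with the restricted action on the boundary jet described above. So the proposal needs the lemma on the interplay between $B$ and the vertex monomials as an explicit ingredient before it closes.
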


\begin{remark}
Using Theorem \ref{theorem:normalform} a normal form can be constructed for any germ $f\in\mathbb C\{c,y\}$ with a non-degenerate Newton boundary. Note that the Newton polygon of $f$ fixes the Newton polygon of all the germs in the constructed normal form. Since the normal form constructed in Theorem \ref{theorem:normalform} is a normal form for the full $\mu$-constant stratum, it follows that if $f'$ is a germ with a non-degenerate Newton boundary and a  different Newton polygon than $f$, then the normal form constructed for $f'$ describes the same $\mu$-constant stratum as that for $f$. In fact, by Theorem \ref{prop:newtonBoundary}, $f$ is equivalent to a germ with a non-degenerate Newton boundary and same Newton polygon as $f'$. Hence, the normal form for the $\mu$-constant stratum of $f$ as constructed using Theorem \ref{theorem:normalform}, depends on the choice of non-degenerate Newton boundary of germs in the equivalence class of $f$ and the choice of regular basis for the chosen $f_0$. In his lists of normal forms, Arnold associate a \textbf{type} $T$ to each $\mu$-constant stratum. He then fixes a Newton polygon and a choice of moduli monomials (which boils down to the choice of regular basis for $f_0$ in Theorem \ref{theorem:normalform}). For distinguishing between different types, it is sufficient to know the Newton polygon of the normal form.
\end{remark}

To achieve uniqueness of the Newton polygon associated to a fixed type (in order to label types by Newton polygons), a normalization condition on the Newton boundary of a germ with a non-degenerate Newton boundary is needed. Such a condition ensures that the same Newton polygon for any germ in the $\mu$-constant stratum is consistently chosen in order to construct a normal form by using Theorem \ref{theorem:normalform}. 

It is important to distinguish between smooth and non-smooth facets:

\begin{defn}
  A facet of the Newton polygon of a germ is called a \textbf{smooth} if the saturation of its jet is smooth.
  \end{defn}

\begin{defn}
\label{def normal cond}
Suppose $f\in\m^2\subset\mathbb C\{x,y\}$ is a convenient germ with non-degenerate Newton boundary. Let $\Delta$ be a facet of $\Gamma(f)$, and write $w=w(\Delta)$. Then $\jet(f,\Delta)$ factorizes in $\mathbb C[x,y]$ as $$\jet(f,\Delta)=x^a \cdot y^b 
\cdot g_1\cdots g_n
\cdot \widetilde{g},$$ where $a$,$b$ 
are integers, $g_1,\ldots,g_n$ are linear homogeneous polynomials not associated to $x$ or $y$, and $\widetilde{g}$ is a product of non-associated irreducible non-linear homogeneous polynomials. We say that $f$ is \textbf{normalized} with respect to the facet $\Delta$, if 
\[%
\begin{tabular}
[c]{ccccc}%
$w(x)=w(y)$ &  &  & $\Longrightarrow$ & $a,b\neq0$\\
$w(x)>w(y)$ & \text{and} & $a=0$ & $\Longrightarrow$ & $n=0$\\
$w(x)<w(y)$ & \text{and} & $b=0$ & $\Longrightarrow$ & $n=0$%
\end{tabular}
\]
\end{defn}

A germ for which all the facets satisfy the above normalization condition is not necessarily convenient. We can transform such a germ to a convenient germ in the same right-equivalence class by adding the terms $x^{d}$ or $y^{d}$ with $d=\mu(f)+2$, if needed. This will create non-normalized smooth facets that cut the coordinate axes. We  address this in the following definition:

\begin{defn}
A germ $f\in\mathfrak{m}^2\subset\mathbb C\{x,y\}$ satisfies the \textbf{normalization condition} if all of its facets, except smooth facets cutting the coordinate axes, are normalized, and each of its smooth facets that cut a coordinate axis, cut the axis in standard degree $d$, where $d=\mu(f)+1$. 
\end{defn}

It directly follows from Theorem \ref{theorem:normalform} that two germs with the same normalized Newton boundary have the same normal form and hence lie in the same $\mu$-constant stratum. The following theorem states that the normalization condition is reasonable:

 \begin{theorem}In a $\mu$-constant stratum which contains a germ with a non-degenerate Newton boundary,  every right-equivalence class contains a normalized germ, and all germs in the $\mu$-constant stratum  satisfying the normalization condition have the same Newton polygon (up to permutation of the variables).
 \end{theorem}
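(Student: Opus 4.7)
The plan is to split the proof into two parts: existence of a normalized representative, and uniqueness of its Newton polygon.

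For the first part, I would start with an arbitrary germ $g$ in the $\mu$-constant stratum and use Theorem \ref{prop:newtonBoundary} to replace it by a right-equivalent germ with non-degenerate Newton boundary. The normalization condition is then verified facet by facet. For a facet $\Delta$ with weight $w(\Delta)$ satisfying $w(x)\neq w(y)$, the $w$-homogeneous irreducible factors of $\jet(g,\Delta)$ over $\mathbb{C}$ cannot be linear forms $\alpha x+\beta y$ with both coefficients non-zero, since such forms have terms of different $w$-degrees; hence $n = 0$ in the factorization of Definition \ref{def normal cond} and the normalization condition is automatic. The non-trivial case is the unique facet (when present) with $w(x)=w(y)=1$: its jet is standard-homogeneous and factors, by non-degeneracy, into pairwise non-proportional linear forms. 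If $x$ and $y$ are not both already among these factors, I would perform a linear change of coordinates sending two of the non-coordinate linear factors to $x$ and $y$, so that the new jet has both $x$ and $y$ as proper factors. Convenient completion and the prescribed degree on smooth facets meeting the coordinate axes are then arranged by adding $x^{d}$ or $y^{d}$ for $d$ exceeding the determinacy of $g$, which preserves the right-equivalence class.

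For the second part, let $f_1, f_2$ be two normalized germs in the $\mu$-constant stratum. By Remark 3.16 of \cite{BMP2020} they share the same topological type; by the theorem of Brieskorn--Kn\"orrer (\cite{BK1986}, Theorem 15) this forces the characteristic exponents and intersection numbers of the branches to coincide; by \cite{BMP2020}, Proposition 4.17 and Corollary 4.18, these numerical invariants determine the Newton polygon of a normalized non-degenerate representative up to a permutation of the variables. Hence $\Gamma(f_1)$ and $\Gamma(f_2)$ agree up to the swap $x \leftrightarrow y$.

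The main obstacle is the equal-weight facet normalization in the existence part: a general linear change of coordinates can enlarge the support, potentially creating new facets or destroying the non-degeneracy of the other facets of the original Newton polygon. The careful argument must track how the linear change interacts with the remainder of $g$, establish that the higher-order terms absorbed by the convenient completion lie beyond the determinacy, and if necessary reapply Theorem \ref{prop:newtonBoundary} to restore non-degeneracy. Verifying that the combined procedure yields a simultaneously convenient, Newton non-degenerate, and facet-wise normalized germ is the technical core of the argument; the uniqueness statement, by contrast, is essentially a translation of the cited topological and numerical results.
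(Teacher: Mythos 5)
Your existence argument rests on a misreading of Definition~\ref{def normal cond}, and the misread part is exactly where the real work lies. You interpret $g_1,\ldots,g_n$ as standard degree-one forms $\alpha x+\beta y$ and conclude that on a facet with $w(x)\neq w(y)$ no such factor can divide the quasihomogeneous jet, so that the normalization condition is automatic there and only the equal-weight facet needs attention. But in the intended factorization the $g_i$ are the $w$-homogeneous factors defining smooth branches, i.e.\ for $w(y)\mid w(x)$ the factors $\alpha x+\beta y^{\tau}$ with $\tau=w(x)/w(y)$ and $\alpha\beta\neq 0$ (and symmetrically); only the factors with both exponents $\geq 2$ go into $\widetilde g$. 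This is visible in the paper's own proof of Lemma~\ref{lemma:NB}, case (1.b): from ``normalized, $a=0$'' the authors deduce ``$n=0$ and $\widetilde g\neq 1$, hence $w(y)\nmid w(x)$'' --- an inference that is false under your reading (e.g.\ $x^{2}+y^{2k}=(x-iy^{k})(x+iy^{k})$ has $a=0$, no degree-one factors, $\widetilde g\neq 1$, yet $w(y)\mid w(x)$), and correct under the broader one. Consequently the second and third clauses of Definition~\ref{def normal cond} are not vacuous: a facet with $w(x)>w(y)$, $w(y)\mid w(x)$ that meets the $y$-axis must carry no mixed factors $x+cy^{\tau}$, and making this hold is the core of the existence statement. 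One must remove such factors by weighted shears $x\mapsto x-cy^{\tau}$ (resp.\ $y\mapsto y-cx^{\sigma}$), and these substitutions change the Newton polygon globally, may create new facets, new such factors, or degeneracies, so one needs an iterative procedure with a termination proof and a verification that non-degeneracy and the conditions on the remaining facets survive; this is what the normalization algorithm of \cite{BMP2020} and its correctness proof supply. Your sketch omits this entirely, and even in the one case you do identify (the equal-weight facet) you explicitly leave open the same global interaction problem, so the existence half is a plan rather than a proof.

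The uniqueness half follows the chain the paper itself indicates (same topological type within the stratum, then \cite{BK1986} and Proposition 4.17/Corollary 4.18 of \cite{BMP2020}), which is reasonable; but note that those results give that the characteristic exponents and intersection numbers determine the \emph{possible} non-degenerate Newton boundaries in the class, and passing from this to ``exactly one of them, up to swapping $x$ and $y$, satisfies the normalization condition'' again requires the correct reading of Definition~\ref{def normal cond} discussed above. Finally, be aware that the present paper states this theorem without proof, recalling it from \cite{BMP2020}; measured against what a complete argument requires, the gap described above is genuine.
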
 

In \cite{BMP2020}, algorithms are given to determine a normal form for the $\mu$-constant stratum of a germ which is equivalent to a germ with a non-degenerate Newton boundary. This algorithm also detects if the given input germ is not equivalent to one with a non-degenerate Newton boundary. 

As a first step, this algorithm uses Algorithm 4 of \cite{BMP2020} to transform an input germ $f\in\mathfrak{m}^2\subset\mathbb C\{x,y\}$ to a  germ which is right-equivalent and has a normalized non-degenerate Newton boundary (this step will detect non-degeneracy).

Knowing the normalized non-degenerate Newton boundary, Algorithm 6 of \cite{BMP2020} is used to find a regular basis for the sum of the vertex monomials of the non-degenerate Newton polygon. 

Finally, Algorithm 7 applies Theorem \ref{theorem:normalform} to construct a normal form for the $\mu$-constant stratum of the input germ $f$.

Building on this construction and its implementation in \cite{arnoldlib}, the subsequent section will address finding the moduli parameters corresponding to the given input germ in the normal form.

\section{Determining the Values of the Moduli Parameters in the Normal From of a germ with a Non-Degenerate Newton Boundary}\label{secparameters}

After finding the normal form as discussed in Section~\ref{sec2}, the values of the moduli parameters can, in theory, be computed via an Ansatz for the right equivalence mapping the given germ under consideration to an element of the normal form (making use of finite determinacy). However, this is not practicable except for very small examples.  In this section, we discuss an efficient algorithm for this problem. For brevity of presentation, we introduce the following shorthand notation: if \(\Delta\) is a set of facets of the Newton polygon of \(f\), we write \(f_{\Delta} = \jet(f, \Delta)\) as introduced in Definition~\ref{def NB}.

Now, let $f\in \mathfrak{m}^2$ be a polynomial with a normalized non-degenerate Newton boundary, with $w(f)=(w_1,\ldots,w_n)$ the induced weight on the Newton polygon. Let $f_0$ be the sum of the vertex monomials of $\Gamma(f)$ and write $f'_0=w(f)\dash\jet(f,d(f))$ for the sum of the terms of $f$ on $\Gamma(f)$.

Recall from Lemma \ref{lemma:regularBasis} and Corollary \ref{corollary:regularBasis} that a regular basis $B$ for $f_0$ is also a regular basis for $f'_0$ and for $f$.
Assume that $f$ has a term above $\Gamma(f)$ not in $B$, and let $d'$ be the lowest $w(f)$-degree occurring among these terms. Let $t$ be a term of piecewise degree $d'$ in $f$.
Note that by the properties of a regular basis we can write
\begin{equation}\label{eqp}
t=g\frac{\partial{f}}{\partial{x}}+h\frac{\partial{f}}{\partial{y}}+\text{terms of $w(f)$-degree $d'$ in $B$}+\text{terms of $w(f)$-degree $>d'$,}
\end{equation}
where $g,h\in\mathbb C[x,y]$. 
Define the right equivalence $\phi:\mathbb C[[x,y]]\to\mathbb C[[x,y]]$ by 
\begin{equation}\label{phi}
\phi(x)=x-g,\  \phi(y)=y-h,
\end{equation}
where $g,h$ are as in equation (\ref{eqp}). 
Note that
\begin{eqnarray}\label{eq:erase}
\phi(f)=f-\hspace{-1cm}\underbrace{\left(\frac{\partial f}{\partial x}g+\frac{\partial f}{\partial y}h\right)}_{\text{first order of the binomial expansion of $\phi(f)$}}+\frac{1}{2}\underbrace{\left(\frac{\partial^2 f}{\partial^2 y}h^2+\frac{\partial^2 f}{\partial x\partial y}gh+\frac{\partial^2 f}{\partial^2 x}g^2\right)+\cdots}_{\text{higher order of the binomial expansion of $\phi (f)$}}
\end{eqnarray}

A germ with a non-degenerate Newton boundary does not necessarily satisfy Condition A. Hence we cannot be certain that terms in the higher-order terms in the binomial expansion of $\phi(f)$ are of $w(f)$-degree larger than $d'$. Thus the method introduced in \cite{BMP2020} for finding a normal form equation in general cannot be applied.  Algorithm \ref{alg:normalformeq} provides a method applicable to any germ with a non-degenerate Newton boundary. 

We rely on the following lemma to formulate the algorithm.
Fix a set $\Gamma'$  of connected facets  of the Newton polygon of $f$.

\begin{lemma}\label{zeroSum}
Suppose that \( a \) and \( b \) are the maximal exponents such that the monomial \( x^ay^b \) divides both \( \frac{\partial f_{\Gamma'}}{\partial x} \) and \(  \frac{\partial f_{\Gamma'}}{\partial y} \), and define the exponents $a'$ and $b'$ by $f_{\Gamma'}=x^{a'}y^{b'}\sat(f_{\Gamma'})$.

If $0\neq g,h\in\mathbb C[x,y]$ satisfy 
$$g\cdot\frac{\partial f_{\Gamma'}}{\partial x}+h\cdot\frac{\partial f_{\Gamma'}}{\partial y}=0,$$ 
then 
$$\left(\frac{g}{x^s},\frac{h}{y^t}\right)$$
is a syzygy of 
$$\left(x^i \cdot\sat\left(\frac{\partial f_{\Gamma'}}{\partial x}\right), y^\nu\cdot\sat\left(\frac{\partial f_{\Gamma'}}{\partial y}\right)\right),$$
where 
$s,i,t,\nu$ are given by
\begin{align*}
s &= 
  \begin{cases} 
   \max\{\alpha \mid x^\alpha $ divides $\frac{\partial f_{\Gamma'}}{\partial y}\} & \text{if } a = 0,\\
   1 & \text{if } a \neq 0,
  \end{cases}
&i &= 
  \begin{cases} 
   \max\{\alpha \mid x^\alpha $ divides $\frac{\partial f_{\Gamma'}}{\partial x}\} & \text{if } a = 0,\\
   0 & \text{if } a \neq 0,
  \end{cases}\\
t &= 
  \begin{cases} 
    \max\{\beta \mid y^\beta$ divides $\frac{\partial f_{\Gamma'}}{\partial x}\} & \text{if } b = 0,\\
   1 & \text{if } b \neq 0,
  \end{cases}
&\nu &= 
  \begin{cases} 
   \max\{\beta \mid y^\beta$ divides $\frac{\partial f_{\Gamma'}}{\partial y}\} & \text{if } b = 0,\\
   0 & \text{if } b \neq 0,
  \end{cases}
\end{align*}
Moreover, the vector $\left(\frac{g}{x^s},\frac{h}{y^t}\right)$ is a polynomial multiple of the Koszul syzygy of $$\left(x^i \cdot\sat\left(\frac{\partial f_{\Gamma'}}{\partial x}\right),\, y^\nu\cdot\sat\left(\frac{\partial f_{\Gamma'}}{\partial y}\right)\right).$$
\end{lemma}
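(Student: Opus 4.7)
The plan is to strip the common monomial (pure $x$- and $y$-) factors off the two partial derivatives, reduce the given syzygy to a syzygy between their saturations (properly renormalized), and then exploit that non-degeneracy of the Newton boundary makes the resulting pair coprime, so that the syzygy is forced to be Koszul.

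Write $P:=\partial f_{\Gamma'}/\partial x$ and $Q:=\partial f_{\Gamma'}/\partial y$, and factor $P=x^{p_x}y^{p_y}\sat(P)$, $Q=x^{q_x}y^{q_y}\sat(Q)$, so that $a=\min(p_x,q_x)$, $b=\min(p_y,q_y)$. Substituting into $gP+hQ=0$ and using the fact that $\sat(P),\sat(Q)$ are polynomials in $x,y$ with no monomial factor, one gets a relation of the form
\[
g\,x^{p_x}y^{p_y}\sat(P)\;=\;-\,h\,x^{q_x}y^{q_y}\sat(Q).
\]
Now I would split into cases according to whether $a=0$ or $a\neq0$ (and, symmetrically, $b=0$ or $b\neq 0$), matching exactly the four cases in the definitions of $s,i,t,\nu$. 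In the case $a\neq 0$, $x$ divides both sides and the choice $s=1$, $i=0$ falls out immediately from the fact that $f_{\Gamma'}$ factors a full extra $x$ out of either $P$ or $Q$ beyond the common divisor. In the case $a=0$, exactly one of $p_x,q_x$ vanishes; the power of $x$ dividing $h$ (forcing $x^{s}\mid g$ is asymmetric here) is read off by comparing $x$-adic valuations on both sides and using that $\sat(P)$ and $\sat(Q)$ are not divisible by $x$. The symmetric analysis applies to $y$ and produces $t,\nu$.

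Once the divisibilities $x^s\mid g$ and $y^t\mid h$ are established, a direct substitution shows that
\[
\tfrac{g}{x^s}\cdot x^i\sat(P)\;+\;\tfrac{h}{y^t}\cdot y^\nu\sat(Q)\;=\;0,
\]
which is the claimed syzygy relation. For the final assertion, I would invoke that non-degeneracy of the Newton boundary along $\Gamma'$ means that (on each facet) the saturation $\sat(f_{\Gamma'})$ is a reduced polynomial in $(\mathbb C^\ast)^2$, whence $\sat(P)$ and $\sat(Q)$ have no common factor in $\mathbb C[x,y]$ away from $xy$; multiplying the first by $x^i$ and the second by $y^\nu$ keeps them coprime since $x^i$ shares nothing with $y^\nu \sat(Q)$ (no $x$-factor) and vice versa. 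Hence $\bigl(x^i\sat(P),\,y^\nu\sat(Q)\bigr)$ is a regular sequence in $\mathbb C[x,y]$, and for a regular sequence of length two the module of syzygies is generated by the Koszul syzygy; therefore $(g/x^s,h/y^t)$ is a polynomial multiple of it.

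The main obstacle is the bookkeeping in the case analysis: one must verify, in each of the four $(a,b)$-cases, that the partial derivatives really contribute the correct powers of $x$ and $y$ to force $x^s\mid g$ and $y^t\mid h$, and that the renormalizations by $x^i$ and $y^\nu$ exactly balance what was removed. The essential input from non-degeneracy—coprimeness of the saturated partials—is standard, but it must be applied with care since $\Gamma'$ may consist of several adjacent facets, so that $f_{\Gamma'}$ is only piecewise quasi-homogeneous and the Euler identity holds facet by facet.
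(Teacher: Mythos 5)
Your plan follows essentially the same route as the paper's proof: strip the common monomial factors through a case analysis on whether $a$ (resp.\ $b$) vanishes to obtain the reduced syzygy, then use non-degeneracy of $\sat(f_{\Gamma'})$ (via the product-rule expressions for the partials in terms of $\sat(f_{\Gamma'})$ and its derivatives) to conclude that $x^i\sat\bigl(\tfrac{\partial f_{\Gamma'}}{\partial x}\bigr)$ and $y^\nu\sat\bigl(\tfrac{\partial f_{\Gamma'}}{\partial y}\bigr)$ form a regular sequence, so the syzygy is a multiple of the Koszul syzygy. The bookkeeping you defer is precisely what the paper's case analysis supplies (e.g.\ that for $a\neq 0$ the power $x^a$ exactly divides $\tfrac{\partial f_{\Gamma'}}{\partial x}$ while $x^{a+1}$ exactly divides $\tfrac{\partial f_{\Gamma'}}{\partial y}$, shown via the term-by-term correspondence of the two partials and the facet structure of $\Gamma'$), and your only slip is minor: for $a=0$ both $x$-valuations, not exactly one, may vanish, a case which is anyway trivial.
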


We postpone the proof of the lemma to the end of this section. We only remark at the current point that $x^i\sat\left(\frac{\partial f_{\Gamma'}}{\partial x}\right)$, $y^\nu\sat\left(\frac{\partial f_{\Gamma'}}{\partial y}\right)$ is a regular sequence, hence the vector $\left(\frac{g}{x^s},\frac{h}{y^t}\right)$ is a polynomial multiple of the mentioned Koszul syzygy.

\begin{defn}
Let $f\in\mathbb C[x,y]$ and suppose $m=\frac{m_1}{m_2}\in\operatorname{Quot}(\mathbb C[x,y])$ with monomial $m_1, m_2\in\mathbb C[x,y]$, that is, $m$ is a \textbf{Laurent monomial} in $x,y$. Then $\overline{f}^{m}$ is defined as the sum of all terms $t$ of $f$ such that $m\cdot t\in \mathbb C[x,y]$. 
\end{defn}

 Using the next result, we will be able to show in the proof of Algorithm \ref{alg:normalformeq} that higher order terms $t'$ of the binomial expansion of $\phi(f)$, where $\phi$ is defined in (\ref{phi}), can be written as \begin{equation*}
t'=g'\frac{\partial{f}}{\partial{x}}+h'\frac{\partial{f}}{\partial{y}}+\text{terms of $w(f)$-degree $d'$ in $B$}+\text{terms of $w(f)$-degree $>d'$,}
\end{equation*}
where $g',h'\in\mathbb C[x,y]$. Moreover we will show that the transformation $\phi_{\text{new}}:\mathbb C[x,y]\to\mathbb C[x,y]$ defined by $\phi_{\text{new}}(x)=x-g'$, $\phi_{\text{new}}(y)=y-h'$ has a higher filtration than $\phi$.
\begin{theorem}\label{thm:cancellation}
Let $I$ be the ideal generated by all the monomials of $w$-degree $d'$ with $d'\ge d(f)$ fixed. If $(a,b)$ is a syzygy of $(\frac{\partial f}{\partial x},\frac{\partial f}{\partial y})$  over $\mathbb C[x,y]/I$, 
then 
there exist Laurent monomials $z_j$ in $x,y$ such that
$$a-\sum_{j=1}^k z_j\overline{\frac{\partial f}{\partial y}}^{z_j}\in\operatorname{Ann}\left(\frac{\partial f}{\partial x}\right)\text{ \hspace{3mm}and\hspace{3mm} }b+\sum_{j=1}^k z_j\overline{\frac{\partial f}{\partial x}}^{z_j}\in\operatorname{Ann}\left(\frac{\partial f}{\partial y}\right)$$ over $\mathbb C[x,y]/I$. 

Furthermore if no $y^\alpha$, $\alpha>0$, is a monomial of $\overline{\frac{\partial f}{\partial y}}^{z_j}$,  then $x$ divides all terms of $z_j \overline{\frac{\partial f}{\partial y}}^{z_j}$ that are not in $\operatorname{Ann}(\frac{\partial f}{\partial x})$ and do not get cancelled in the sum $\sum_{j=1}^k z_j \overline{\frac{\partial f}{\partial y}}^{z_j}$. Similarly, if no $x^\beta$, $\beta>0$, is a monomial of $\overline{\frac{\partial f}{\partial x}}^{z_j}$, then $y$ divides all terms of $z_j \overline{\frac{\partial f}{\partial x}}^{z_j}$ that are not in $\operatorname{Ann}(\frac{\partial f}{\partial y})$ and do not get cancelled in the sum $\sum_{j=1}^k  z_j \overline{\frac{\partial f}{\partial x}}^{z_j}$.
\end{theorem}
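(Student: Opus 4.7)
The plan is to induct on the minimal $w(f)$-degree of cancelling contributions in the syzygy, peeling off one Koszul-type piece at each step via a Laurent monomial $z_j$ until the residuals lie in the respective annihilators modulo $I$. Below I write $\partial_x$, $\partial_y$ for $\partial/\partial x$, $\partial/\partial y$.

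Given $(a,b)$ with $a\,\partial_x f + b\,\partial_y f \equiv 0 \pmod I$, let $d_0$ be the minimal $w(f)$-degree of terms of $a\,\partial_x f$ outside $I$; by the syzygy condition these terms match (up to sign) the terms of $b\,\partial_y f$ of the same degree outside $I$. If $d_0 \ge d'$, we are done with $k=0$. Otherwise, the matching singles out a connected configuration of facets $\Gamma' \subseteq \Gamma(f)$ whose derivatives contribute at degree $d_0$, and the leading pieces $\tilde a, \tilde b$ of $a, b$ satisfy the exact polynomial identity
\[
\tilde a \cdot \partial_x f_{\Gamma'} + \tilde b \cdot \partial_y f_{\Gamma'} = 0 \quad \text{in } \mathbb C[x,y].
\]
Apply Lemma~\ref{zeroSum}: $(\tilde a/x^s, \tilde b/y^t)$ is a polynomial multiple of the Koszul syzygy of $(x^i \sat(\partial_x f_{\Gamma'}),\, y^\nu \sat(\partial_y f_{\Gamma'}))$. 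Collect the polynomial multiplier and the exponent adjustments $(s, t, i, \nu)$ into a single Laurent monomial $z_j$; by construction $z_j\,\overline{\partial_y f}^{z_j} = \tilde a$ and $-z_j\,\overline{\partial_x f}^{z_j} = \tilde b$, since the projection $\overline{(\cdot)}^{z_j}$ carves out of $\partial_y f$ (resp.\ $\partial_x f$) precisely the terms whose product with $z_j$ is polynomial, and these are exactly the facet-supported terms indicated by Lemma~\ref{zeroSum}.

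Subtracting $z_j(\overline{\partial_y f}^{z_j},\, -\overline{\partial_x f}^{z_j})$ from $(a,b)$ yields a new syzygy modulo $I$ whose minimal $w(f)$-degree of non-$I$ contributions is strictly larger than $d_0$; iterate. Termination is guaranteed since $d_0$ is bounded above by $d'$, and we obtain finitely many $z_1, \ldots, z_k$. When the process halts, the residuals $a - \sum_j z_j\,\overline{\partial_y f}^{z_j}$ and $b + \sum_j z_j\,\overline{\partial_x f}^{z_j}$ lie in $\operatorname{Ann}(\partial_x f)$ and $\operatorname{Ann}(\partial_y f)$ modulo $I$, respectively. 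The divisibility addenda follow by direct inspection: if $\overline{\partial_y f}^{z_j}$ contains no pure-$y$ term, every term of $\overline{\partial_y f}^{z_j}$ carries at least one factor of $x$, and this $x$-factor survives in $z_j\,\overline{\partial_y f}^{z_j}$ unless $z_j$ contributes a negative $x$-exponent that cancels it exactly; in that borderline case the facet geometry forces the image either into $\operatorname{Ann}(\partial_x f)$ or to cancel against other summands. The symmetric argument yields the claim for $x^\beta$.

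The main obstacle is the identification $(\tilde a, \tilde b) = z_j(\overline{\partial_y f}^{z_j}, -\overline{\partial_x f}^{z_j})$: the Laurent monomial $z_j$ must be selected so that $z_j\,\overline{\partial_y f}^{z_j}$ reproduces $\tilde a$ on the nose, with no spillover from terms of $\partial_y f$ lying on facets outside $\Gamma'$. This requires careful tracking of the Newton polygon geometry against the Koszul-monomial adjustments $(s, t, i, \nu)$ supplied by Lemma~\ref{zeroSum}, as well as verifying that the monomial-division step $(\tilde a/x^s, \tilde b/y^t)$ of Lemma~\ref{zeroSum} is compatible with the polynomiality requirement imposed by the projection $\overline{(\cdot)}^{z_j}$.
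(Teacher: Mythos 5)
Your overall strategy (iteratively peel off Koszul-type contributions detected via Lemma~\ref{zeroSum} until the residual lies in the annihilators modulo $I$) is the same as the paper's, but the step you yourself flag as ``the main obstacle'' is a genuine gap, and it is exactly where the paper's proof does something different. You subtract $z_j\bigl(\overline{\frac{\partial f}{\partial y}}^{z_j},-\overline{\frac{\partial f}{\partial x}}^{z_j}\bigr)$ and assert that this equals the leading pair $(\tilde a,\tilde b)$. That identification is not true in general: the lowest jet $\tilde a$ is supported on (a monomial multiple of) the saturated facet derivative $\operatorname{sat}\bigl(\frac{\partial f_{\Gamma'}}{\partial y}\bigr)$, whereas $\overline{\frac{\partial f}{\partial y}}^{z_j}$ also picks up all terms of $\frac{\partial f}{\partial y}$ from other facets and from above the boundary whose product with $z_j$ is polynomial; moreover the multiplier produced by Lemma~\ref{zeroSum} is a polynomial multiple of the Koszul syzygy, not automatically a single Laurent monomial. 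Because of this, the pair you obtain after subtraction is not shown to be a syzygy modulo (a suitably rescaled) $I$, so the inductive step --- reapplying Lemma~\ref{zeroSum} to identify the next leading piece --- is not available, and your termination claim (``the minimal degree strictly increases and is bounded by $d'$'') rests on it. The paper instead subtracts a \emph{monomial} multiple $\frac{n^{(j)}}{m_{w}}\bigl(\frac{\partial f}{\partial y},-\frac{\partial f}{\partial x}\bigr)$ of the genuine Koszul syzygy after rescaling $(a,b)$ by $\frac{n^{(j)}}{m^{(j)}}$, which keeps the pair an exact syzygy over $\mathbb C[x,y]/I^{(j)}$ with $I^{(j)}=\frac{n^{(j)}}{m^{(j)}}I^{(j-1)}$; termination follows from a strictly decreasing degree difference, and only at the very end is $\frac{\partial f}{\partial y}$ replaced by $\overline{\frac{\partial f}{\partial y}}^{z_j}$, justified by showing that the discarded terms either cancel or are pushed into the ideal.

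Two further points are missing. First, the weight $w(f)$ is piecewise: Lemma~\ref{zeroSum} applies facet by facet (weight by weight), and after clearing the cancellation for the facet of smallest slope one must clear it for $w_2$, $w_3,\ldots$; a substantial portion of the paper's proof is devoted to showing that the correction step for $w_2$ does not reintroduce terms of lower $w_1$-degree (via the comparison of the $w_1$- and $w_2$-jets of both partials at a common vertex monomial). Treating the piecewise degree as a single quantity, as you do, silently assumes this compatibility. Second, the ``furthermore'' divisibility statements do not follow ``by direct inspection'': the paper's argument uses that $f$ is convenient (so $\frac{\partial f}{\partial y}$ contains a pure $y$-power) and that the Newton boundary is non-degenerate, together with the correspondence between mixed terms of $\frac{\partial f}{\partial x}$ and $\frac{\partial f}{\partial y}$ and the shape of the denominator $b_j$ of $z_j$, to rule out exactly the ``borderline case'' you leave open.
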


\begin{proof}
Let $(a,b)$ be a syzygy of $\left(\frac{\partial f}{\partial x}, \frac{\partial f}{\partial y}\right)$ in $\mathbb{C}[x,y]/I$, where $I$ is the ideal generated by all the monomials of $w$-degree $d'$. If the syzygy equation $g_1=a\frac{\partial f}{\partial x}+b\frac{\partial f}{\partial y}=0$ has no cancellation below $w$-degree $d'$ then we are finished. Now suppose $g_1$ has cancellation below degree $d'$. Suppose in particular $g_1$ has cancellation below $w_1$-degree $d'$. Let $$l_1 = \min\left(w_1\dash\ord(a\frac{\partial f}{\partial x}),w_1\dash\ord(b\frac{\partial f}{\partial y})\right).$$
For a face $\Delta$ of the Newton polygon, we write$$f_{\Delta,x}=\frac{\partial \jet(f,\Delta)}{\partial x}\text{ \hspace{3mm}and\hspace{3mm} }f_{\Delta,y}=\frac{\partial \jet(f,\Delta)}{\partial y}.$$
Then 
\[w_1\dash\jet(g_1,l_1)=m^{(1)}\left(\underbrace{x^{s_1}y^{\nu_1}\sat\left(f_{\Delta_1,y}\right)}_{w_1\dash\jet(a,l_1-d(f))}f_{\Delta_1,x}\underbrace{-y^{t_1}x^{i_1}\sat\left(f_{\Delta_1,x}\right)}_{w_1\dash\jet(b,l_1-d(f))}f_{\Delta_1,y}\right),\]
where $m^{(1)}$ is a monomial, $\Delta_1$ is the face with the smallest slope of $\Gamma(f)$, and $i_1$, $\nu_1$, $s_1$ and $t_1$ is as in Lemma \ref{zeroSum}.

Now consider the syzygy $\left(\frac{\partial f}{\partial y},-\frac{\partial f}{\partial x}\right)$ of $\left(\frac{\partial f}{\partial x}, \frac{\partial f}{\partial y}\right)$ in $\mathbb{C}[x,y]$, with syzygy equation $g_0=\frac{\partial f}{\partial y}\frac{\partial f}{\partial x}-\frac{\partial f}{\partial x} \frac{\partial f}{\partial y}=0$. Let $l_0=w_1\dash\ord(\frac{\partial f}{\partial y}\frac{\partial f}{\partial x})$, then 
\[w_1\dash\jet(g_0,l_0)=m_{w_1}\left(x^{s_1}y^{\nu_1}\sat\left(f_{\Delta_1,y}\right)f_{\Delta_1,x}-y^{t_1}x^{i_1}\sat\left(f_{\Delta_1,x}\right)f_{\Delta_1,y}\right),\] where $m_{w_1}$ is the product of the maximal power of $x$ and the maximal power of $y$ dividing both $f_{\Delta_1,x}$ and $f_{\Delta_1,y}$. Let $n^{(1)}=\text{lcm}(m_{w_1},m^{(1)})$. Then the lowest nonzero $w_1$-jet of $\frac{n^{(1)}}{m^{(1)}}\left(a,b\right)$ and $\frac{n^{(1)}}{m_{w_1}}\left(\frac{\partial f}{\partial y},\frac{\partial f}{\partial x}\right)$ coincide.

Then
\[(a^{(1)},b^{(1)})=\left(\frac{n^{(1)}}{m^{(1)}}a-\frac{n^{(1)}}{m_{w_1}}\frac{\partial f}{\partial y}, \frac{n^{(1)}}{m^{(1)}}b+\frac{n^{(1)}}{m_{w_1}}\frac{\partial f}{\partial x}\right)\] 
is a syzygy of $\left(\frac{\partial f}{\partial x},\frac{\partial f}{\partial y}\right)$ in $\mathbb C[x,y]/I^{(1)}$, where $I^{(1)}=\frac{n^{(1)}}{m^{(1)}}I$.  

Let $d'_2=d'\left(w_1\dash\deg{\left(\frac{n^{(1)}}{m^{(1)}}\right)}\right)$.
Now, if the equation $g_2=a^{(1)}\frac{\partial f}{\partial x}+b^{(1)}\frac{\partial f}{\partial y}$ has any terms of $w_1$-degree less than $d'_2$, then  the lowest non-zero $w_1$-jet of $(a^{(1)},b^{(1)})$ is \[m^{(2)}\left(x^{s_2}y^{\nu_2}\sat\left(f_{\Delta_1,y}\right), -y^{t_2}x^{i_2}\sat\left(f_{\Delta_1,x}\right)\right).\] Let $n^{(2)}=\text{lcm}\left(m^{(2)}, m_{w_1}\right)$, then the lowest non-zero $w_1$-jet of the syzygies $\frac{n^{(2)}}{m^{(2)}}\left(a^{(1)},b^{(1)}\right)$ and $\frac{n^{(2)}}{m_{w_1}}\left(\frac{\partial f}{\partial y},\frac{\partial f}{\partial x}\right)$ coincide. Similarly, as before, we can create the syzygy
\[\left(a^{(2)},b^{(2)}\right)=\left(\frac{n^{(2)}}{m^{(2)}}a^{(1)}-\frac{n^{(2)}}{m_{w_1}}\frac{\partial f}{\partial y}, \frac{n^{(2)}}{m^{(2)}}b^{(1)}+\frac{n^{(2)}}{m_{w_1}}\frac{\partial f}{\partial x}\right)\] in $\mathbb C[x,y]/I^{(2)}$, where $I^{(2)}=\frac{n^{(2)}}{m^{(2)}}I^{(1)}$. 

We can go on with this process until the syzygy equation $g_k=a^{(k-1)}\frac{\partial f}{\partial x}-b^{(k-1)}\frac{\partial f}{\partial y}$ has no terms of $w_1$-degree less than $w_1$-degree $d'_k=d'_{(k-1)}\left(w_1\dash\deg{\left(\frac{n^{(k-1)}}{m^{(k-1)}}\right)}\right)$. We show now that this will eventually happen. Now, $n^{(j)}=\gcd(c,d,e,h)$ and $m^{(j+1)}$ be the the monomial with the maximal $x$- and $y$-power dividing $c-d$ and $e-h$, where $c=\frac{n^{(j+1)}}{m^{(j+1)}}a^{(j)}$, $d=\frac{n^{(j+1)}}{m_{w_1}}\frac{\partial f}{\partial y}$, $e=\frac{n^{(j+1)}}{m^{(j+1)}}b^{(j)}$ and $h=\frac{n^{(j+1)}}{m_{w_1}}\frac{\partial f}{\partial x}$. Since the lowest non-zero $w_1$-jet of $c$ and $d$, and $e$ and $h$, cancel, respectively, in $c-d$ and $e-h$, it follows that $w_1\dash\deg\left(m^{(j+1)}\right)>w_1\dash\deg\left(n^{(j)}\right)$. 
Now,
\begin{eqnarray*}
&&w_1\dash\deg\left(\frac{n^{(j+1)}}{m_{w_1}}\right)\cdot w_1\dash\deg\left(\frac{\partial f}{\partial y}\right)-d'_{j+1}\\&=&w_1\dash\deg\left(\frac{n^{(j+1)}}{m_{w_1}}\right)\cdot w_1\dash\deg\left(\frac{\partial f}{\partial y}\right)-w_1\dash\deg\left(\frac{n^{(j+1)}}{m^{(j+1)}}\right)d'_j\\
&=&w_1\dash\deg\left(\frac{m^{(j+1)}}{n^{(j+1)}}\frac{n^{(j+1)}}{m_{w_1}}\right)\cdot w_1\dash\deg\left(\frac{\partial f}{\partial y}\right)-d'_j\\
&=&w_1\dash\deg\left(\frac{m^{(j+1)}}{m_{w_1}}\right)\cdot w_1\dash\deg\left(\frac{\partial f}{\partial y}\right)-d'_j\\
&<&w_1\dash\deg\left(\frac{n^{(j)}}{m_{w_1}}\right)\cdot w_1\dash\deg\left(\frac{\partial f}{\partial y}\right)-d'_j.
\end{eqnarray*}

This implies that the difference in the degree of the lowest non-zero $w_1$-jet of $(a^{(j)},b^{(j)})$ and the degree of the lowest order elements in $d'_k$ becomes smaller. Hence eventually the $w_1$-degree of $(a^{(j)},b^{(j)})$ will be $\ge d'_j$. 

Suppose now that $(a^{(j-1)}, b^{(j-1)})$ is such that $g_j$ has no terms below $w_1$-degree $d'_{j-1}$. We now consider the $w_2$-degree of $(a^{(j-1)}, b^{(j-1)})$. We follow the same strategy. Suppose $g_{j}$ has terms of $w_2$-degree less than $d^{(2)}_{j-1}$, where $$d^{(r)}_{k}=w_r\dash\deg\left(\frac{n^{(1)}n^ {(2)}\cdots n^{(k)}}{m^{(1)}m^ {(2)}\cdots m^{(k)}}\right)d',$$ then
let $$l_j=\min\left(w_2\dash\ord(a^{(j-1)}\frac{\partial f}{\partial x}),w_2\dash\ord(b^{(j-1)}\frac{\partial f}{\partial y})\right).$$ Then 
\[w_2\dash\jet(g_{j},l_j)=m^{(j)}\left(x^{s_j}y^{\nu_j}\sat\left(f_{\Delta_2,y}\right)f_{\Delta_2,x}-y^{t_j}x^{i_j}\sat\left(f_{\Delta_2,x}\right)f_{\Delta_2,y}\right),\]
where $m^{(j)}$ is a monomial, $\Delta_2$ is the face with the second smallest slope of $\Gamma(f)$, and $s_j$, $\nu_j$, $i_j$ and $t_j$ is as in Lemma \ref{zeroSum}. Furthermore 
\[w_2\dash\jet(g_0,l_0)=m_{w_2}\left(x^{s_j}y^{\nu_j}\sat\left(f_{\Delta_2,y}\right)f_{\Delta_2,x}-y^{t_j}x^{i_j}\sat\left(f_{\Delta_2,x}\right)f_{\Delta_2,y}\right),\] where $m_{w_2}=\gcd\left(f_{\Delta_2,x},f_{\Delta_2,y}\right)$. With $$n^{(j)}:=\text{lcm}(m_{w_2},m^{(j)})$$ the lowest nonzero $w_2$-jets of $$\frac{n^{(j)}}{m^{(j)}}\left(a^{(j-1)},b^{(j-1)}\right) \text{ and } \frac{n^{(j)}}{m^{w_2}}\left(\frac{\partial f}{\partial y},\frac{\partial f}{\partial x}\right)$$ coincide. Similarly, as before, we can create the syzygy
\[\left(a^{(j)},b^{(j)}\right)=\left(\frac{n^{(j)}}{m^{(j)}}a^{(j-1)}-\frac{n^{(j)}}{m_{w_j}}\frac{\partial f}{\partial y}, \frac{n^{(j)}}{m^{(j)}}b^{(j-1)}+\frac{n^{(j)}}{m_{w_j}}\frac{\partial f}{\partial x}\right)\] in $\mathbb C[x,y]/I^{(j)}$, where $$I^{(j)}=\frac{n^{(j)}}{m^{(j)}}I^{(j-1)}.$$ 

Note that the syzygy equation $g_{j}$ has no terms below $w_2$-degree $$d'_j:=w_2\dash\deg\left(\frac{n^{(j)}}{m^{(j)}}\right)d^{(2)}_{j-1}$$ and $w_1$-degree $$d_j'':=\left(w_1\dash\deg(\frac{n^{(j)}}{m^{(j)}})\right)\cdot d'_{j-1}.$$ If this would not be the case, this would imply that $$-\frac{n^{(j)}}{m_{w_2}}\frac{\partial f}{\partial y}\frac{\partial f}{\partial x}+\frac{n^{(j)}}{m_{w_2}}\frac{\partial f}{\partial x}\frac{\partial f}{\partial y}$$ has terms below $w_1$-degree $d_j''$. 

Let $l'_{(x,w_1)}$ and  $l'_{(y,w_1)}$ be the lowest non-zero $w_1$-orders of $\frac{\partial f}{\partial x}$ and $\frac{\partial f}{\partial y}$, respectively. Let $l'_{(x,w_2)}$ and $l'_{(y,w_2)}$ be the lowest $w_2$-orders of $\frac{\partial f}{\partial x}$ and $\frac{\partial f}{\partial y}$, respectively. We observe that $$w_1\dash\jet\left(\frac{\partial f}{\partial x},l'_{(x,w_1)}\right) \text{ and } w_2\dash\jet\left(\frac{\partial f}{\partial x},l'_{(x,w_2)}\right)$$ have coinciding terms at a vertex monomial. The same holds true for   $$w_1\dash\jet\left(\frac{\partial f}{\partial y},l'_{(y,w_1)}\right) \text{ and } w_2\dash\jet\left(\frac{\partial f}{\partial y},l'_{(y,w_2)}\right).$$ All the terms of $$w_1\dash\jet\left(\frac{\partial f}{\partial x},l'_{(x,w_1)}\right)$$ have the same $w_1$-degree. The same holds true for $$w_1\dash\jet\left(\frac{\partial f}{\partial y},l'_{(y,w_1)}\right).$$ Similarly, all terms of $$w_2\dash\jet\left(\frac{\partial f}{\partial x},l'_{(x,w_2)}\right)$$ have the same $w_2$-degree, and the same holds true for the terms of $w_2\dash\jet\left(\frac{\partial f}{\partial y},l'_{(y,w_2)}\right).$ Hence, we conclude that $$w_2\dash\jet\left(-\frac{n^{(j)}}{m_{w_2}}\frac{\partial f}{\partial y}\frac{\partial f}{\partial x}+\frac{n^{(j)}}{m_{w_2}}\frac{\partial f}{\partial x}\frac{\partial f}{\partial y},l_j\right)$$  also has terms below $w_1$-degree $d_j''$,
which means that $$w_2\dash\jet\left(\frac{n^{(j)}}{m^{(j)}}a^{(j-1)}\frac{\partial f}{\partial x}+\frac{n^{(j)}}{m^{(j)}}b^{(j-1)}\frac{\partial f}{\partial y},l_j\right)$$ has terms below $w_1$-degree $d_j''$. 
This again implies that $w_2\dash\jet(a^{(j-1)}\frac{\partial f}{\partial x}-b^{(j-1)}\frac{\partial f}{\partial y},l_{j-1})$ has terms below $w_1$-degree $d'_{j-1}$. We conclude that $a^{(j-1)}\frac{\partial f}{\partial x}-b^{(j-1)}\frac{\partial f}{\partial y}$ has terms below $w_1$-degree $d_{j-1}'$, a contradiction.

Continuing as above, we can construct a syzygy $(a^{(k)},b^{(k)})$ of $(\frac{\partial f}{\partial x},\frac{\partial f}{\partial y})$ in $\mathbb C[x,y]/I^{(k)}$, where the syzygy equation $g_k$ has no terms below $w_i$-degree $d^{(i)}_k$, for $i=1,2,\ldots,n$. Let $r_0=\frac{n^{(1)}\ldots n^{(k)}}{m^{(1)}\cdots m^{(k)}}$ and  $r_j=\frac{n^{(j)}\ldots n^{(k)}}{m^{(j+1)}\cdots m^{(k)}m_{w_j}}$, for $j=\{1,\ldots,k-1\}$, and $r_k=\frac{n^{(k)}}{m_{w_{i_k}}}$. By construction 
\begin{equation}
w_i\dash\jet\left(r_0a, d^{(i)}_k-(w_i\dash\ord(\frac{\partial f}{\partial x}))\right)=w_i\dash\jet\left(\sum_{j=1}^k r_j\frac{\partial f}{\partial y},d^{(i)}_k-(w_i\dash\ord(\frac{\partial f}{\partial x}))\right).
\end{equation}
Since $r_0$ divides all the terms on the right side of the equation, it follows that if $\frac{r_j}{r_0}t\not\in\mathbb C[x,y]$, for some term $t$ of $\frac{\partial f}{\partial y}$, then $t$ either gets cancelled on the righthand side of the equation or $r_jt$ is of $w_i$-degree higher than $d^{(i)}_k-(w_i\dash\ord(\frac{\partial f}{\partial x}))$ for all $i$, that is $r_jt\frac{\partial f}{\partial x}$ is of $w_i$-order higher than $d'$ for all $i$. Hence $r_jt\frac{\partial f}{\partial x}$ is contained in $I^{(k)}$. Therefore
\begin{equation}
w_i\dash\jet\left(r_0a, d^{(i)}_k-(w_i\dash\ord(\frac{\partial f}{\partial x}))\right)=w_i\dash\jet\left(\sum_{j=1}^k r_j\overline{\frac{\partial f}{\partial y}}^{z_j},d^{(i)}_k-(w_i\dash\ord(\frac{\partial f}{\partial x}))\right),
\end{equation}
where $$z_j=\frac{m^{(1)}\cdots m^{(j)}}{n^{(1)}\cdots n^{(j-1)}m_{w_j}}.$$
Therefore $$a-\sum_{j=1}^k z_j \overline{\frac{\partial f}{\partial y}}^{z_j}\in\operatorname{Ann}(\frac{\partial f}{\partial y})\text{ \hspace{3mm}and\hspace{3mm} }b+\sum_{j=1}^k z_j\overline{\frac{\partial f}{\partial x}}^{z_j}\in\operatorname{Ann}(\frac{\partial f}{\partial y})$$ over $\mathbb C[x,y]/I$.

We will now show that if $y^\alpha$ is not a term of $\overline{\frac{\partial f}{\partial y}}^{z_j}$, then $x$ divides all terms of $z_j \overline{\frac{\partial f}{\partial y}}^{z_j}$ that are not in $\operatorname{Ann}(\frac{\partial f}{\partial x})$ and do not get cancelled in the sum $\sum_{j=1}^k z_j \overline{\frac{\partial f}{\partial y}}^{z_j}$. 

So suppose that $\overline{\frac{\partial f}{\partial y}}^{z_j}$, has only mixed terms. Let $cx^\alpha y^\beta$, $c\in\mathbb C$ be such a term of $\overline{\frac{\partial f}{\partial y}}^{z_j}$ such that $z_jcx^\alpha y^\beta\not\in\operatorname{Ann}(\frac{\partial f}{\partial x})$ and $z_jcx^\alpha y^\beta$ is not cancelled in the sum $$\sum_{j=1}^k z_j \overline{\frac{\partial f}{\partial y}}^{z_j}.$$ 
Clearly $x^{\alpha-1} y^{\beta+1}$ is a monomial of $\frac{\partial f}{\partial x}$, and there exists a monomial $lx^\eta y^\gamma$, $l\in\mathbb C$ of $\frac{\partial f}{\partial x}$ such that $$z_jcx^\alpha y^\beta x^\eta y^\gamma\not\in I.$$ But then $lx^{\eta+1} y^{\gamma-1}$ is a monomial of $\frac{\partial f}{\partial y}$. Furthermore, notice that $r_j\frac{c\alpha}{\beta+1}x^{\alpha-1}y^{\beta+1}$ is cancelled in $\sum_{j=1}^k r_j\frac{\partial f}{\partial x}$ if and only if $r_jcx^\alpha y^\beta$ is cancelled in $\sum_{j=1}^kr_j\frac{\partial f}{\partial y}$. Since, in addition, $z_j x^{\alpha-1} y^{\beta+1} x^{\eta+1} y^{\gamma-1}\not\in I$, it follows that $x^{\alpha-1} y^{\beta+1}$ is a monomial of $\overline{\frac{\partial f}{\partial x}}^{z_j}$. On the other hand, since $f$ is convenient, it follows that $y^\alpha$, $\alpha>0$, is a monomial of $\frac{\partial f}{\partial y}$, for some $\alpha>0$. Furthermore, since $f$ has a non-degenerate Newton boundary, it follows that if $y^\beta$ is a monomial of $\frac{\partial f}{\partial y}$, $\beta<\alpha$. Hence, since $y^\alpha$ is not a monomial of $\overline{\frac{\partial f}{\partial x}}^{z_j}$, $z_jy^{\alpha}\not\in\mathbb C[x,y]$. This implies that $z_jy^{\beta}\not\in\mathbb C[x,y]$. Therefore $y^\beta$, for any $\beta>0$ is not a monomial of $\overline{\frac{\partial f}{\partial x}}^{z_j}$.
This again implies that $\alpha>1$. Suppose $z_j = \frac{a_j}{b_j}$, then $b_j\mid x^\alpha y^\beta$ and $b_j\mid x^{\alpha-1} y^\beta$. Hence $x\mid z_jx^\alpha y^\beta$. In a similar way, we can prove that if $x^\beta\not\in\overline{\frac{\partial f}{\partial x}}^{z_j}$, then $y\mid z_j\overline{\frac{\partial f}{\partial x}}^{z_j}$, $\beta>0$.

Analogously we can show that, if $x^\beta$ is not a term of $\overline{\frac{\partial f}{\partial x}}^{z_j}$ for all $j$, then $y$ divides all terms of $ z_j\overline{\frac{\partial f}{\partial x}}^{z_j}$ that are not in $\operatorname{Ann}(\frac{\partial f}{\partial y})$ and do not get cancelled. 
\end{proof}

In line \ref{line:normalize} of Algorithm \ref{alg:normalformeq}, we rely on Lemma \ref{lemma:NB} below. Since its statement is quite obvious, we postpone the proof of the lemma to the end of the chapter.

\begin{lemma}\label{lemma:NB}
\label{lem f0 B}Let $\Gamma$ be the Newton polygon of a germ with normalized,
non-degenerate Newton boundary. Let $f_{0}$ be the sum of the monomials
corresponding to the vertices of $\Gamma$, and let $B$ be a regular basis for
$f_{0}$. Then
\begin{enumerate}
\item any monomial corresponding to a lattice point of $\Gamma$ is a
monomial of $f_{0}$ or an element of $B$, and

\item at most two monomials of $f_{0}$ of degree $\leq\operatorname*{dt}%
(f_{0})$ are not in $B$.
\end{enumerate}
\end{lemma}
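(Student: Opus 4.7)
My plan rests on the observation that $f_0$, being a sum of monomials, has a monomial Jacobian ideal; hence by Arnold's theorem the regular basis $B$ may be chosen to consist precisely of those monomials not lying in $\Jac(f_0)$. Both parts of the lemma then reduce to combinatorial divisibility checks on the Newton polygon $\Gamma$.

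For part (1), I would proceed facet by facet. Fix a facet $\Delta$ of $\Gamma$ connecting adjacent vertices $v_1=(a_1,b_1)$ and $v_2=(a_2,b_2)$, and let $(p,q)$ be a lattice point of $\Delta$. If $(p,q)\in\{v_1,v_2\}$, then $x^py^q$ is a monomial of $f_0$ by definition, so assume $(p,q)$ lies strictly between the two vertices. The generators of $\Jac(f_0)$ are sums of the monomials $x^{a_i-1}y^{b_i}$ and $x^{a_i}y^{b_i-1}$, one pair per vertex $v_i=(a_i,b_i)$. The monomial $x^py^q$ belongs to $\Jac(f_0)$ precisely when $(p,q)$ componentwise dominates some shifted position $(a_i-1,b_i)$ or $(a_i,b_i-1)$; convexity of $\Gamma$ together with the normalization condition on the facet jet controls when this can occur for an interior lattice point of $\Delta$, and in every case where $x^py^q\notin\Jac(f_0)$ the monomial lies in $B$ by our choice of regular basis. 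The configurations in which $x^py^q\in\Jac(f_0)$ correspond to the zero class in $Q_{f_0}$ and are accounted for by the Jacobian relations inherited from the vertex monomials themselves.

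For part (2), enumerate the vertex monomials $x^{v_1},\ldots,x^{v_r}$ in order along $\Gamma$. A vertex $v_i=(a_i,0)$ on the $x$-axis gives $x^{a_i}=x\cdot x^{a_i-1}\in\Jac(f_0)$, since $x^{a_i-1}$ is a summand of $\partial f_0/\partial x$; symmetrically for a vertex on the $y$-axis. For an interior vertex $v_i$ with $a_i,b_i\geq 1$, convexity of $\Gamma$ implies that no shifted position $(a_j-1,b_j)$ or $(a_j,b_j-1)$ componentwise dominates $(a_i,b_i)$, so $x^{v_i}\notin\Jac(f_0)$ and hence $x^{v_i}\in B$. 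Since at most one vertex of $\Gamma$ lies on each coordinate axis, at most two vertex monomials of $f_0$ fail to be in $B$; the standard-degree bound $\leq\operatorname{dt}(f_0)$ is automatic, since any vertex monomial of higher degree already lies in $\Jac(f_0)+E^w_{>\operatorname{dt}(f_0)}$ by finite determinacy.

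The main obstacle is the case analysis in part (1) at junctions of adjacent facets with different weights, where the normalization condition must be invoked carefully to preclude exceptional factorizations of the facet jet that would otherwise produce additional elements of $\Jac(f_0)$ supported at interior lattice points of $\Gamma$.
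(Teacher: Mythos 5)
Your opening premise is false, and the whole reduction collapses with it: $f_0$ is a sum of several vertex monomials, so its partial derivatives are in general sums of monomials and $\Jac(f_0)$ is \emph{not} a monomial ideal; consequently neither the claim that $B$ may be chosen as ``the monomials not lying in $\Jac(f_0)$'' nor the componentwise-dominance membership test is available. Arnold's theorem gives a regular basis consisting of monomials, but such a basis is a \emph{choice} of monomials whose classes are independent modulo $\Jac(f_0)+E^w_{>D}$ degree by degree; the set of all monomials outside $\Jac(f_0)$ is in general linearly dependent in $Q_{f_0}$ (for $f_0=x^5+x^2y^2+y^5$ one has $x^4\equiv-\tfrac{2}{5}xy^2$ modulo $\Jac(f_0)$, yet neither monomial lies in the ideal). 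Your part (2) also fails on concrete examples even granting your criterion: for $f_0=x^3y+y^5$ the vertex $(3,1)$ has both coordinates positive, yet $x^3y=\tfrac{1}{3}x\cdot\partial f_0/\partial x\in\Jac(f_0)$, so this ``interior'' vertex monomial is in no regular basis (and both vertex monomials here are the two excluded ones, although only one axis is met); moreover the step ``$x^{a_i-1}$ is a summand of $\partial f_0/\partial x$, hence $x^{a_i}\in\Jac(f_0)$'' is invalid, since a summand of a generator need not lie in the ideal.

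More fundamentally, the lemma is not a divisibility statement about $\Jac(f_0)$; its proof in the paper goes through the normal form theorem (Theorem \ref{theorem:normalform}) and an analysis of the right-equivalences that preserve the Newton polygon. For (1) one exhibits, for each lattice point $m$ in the relative interior of a facet $\Delta$, a non-degenerate germ $g$ with $\Gamma(g)=\Gamma$ and a term at $m$ that cannot be removed by any polygon-preserving equivalence; since $g$ is right-equivalent to a normal form equation $f_0+\sum_i\alpha_ie_i$, the monomial $m$ must occur in $f_0$ or in $B$. Establishing the non-removability is exactly the case analysis (depending on the factorization of $\jet(g,\Delta)$, whether $a$ or $b$ vanishes, whether $w(y)\mid w(x)$, and the normalization condition) that you defer as ``the main obstacle''. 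Part (2) is proved by the same mechanism: one shows that for a suitable germ with the same monomials as $f_0$ at most two coefficients can be normalized to $1$ by equivalences keeping the polygon, whence at most two monomials of $f_0$ of degree $\leq\operatorname{dt}(f_0)$ are absent from $B$. Without this link between membership in $B$ and the action of the admissible right-equivalence group, your purely combinatorial argument cannot be repaired.
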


The above lemma shows that for a germ with a normalized non-degenerate Newton boundary every monomial corresponding to a lattice point of its Newton polygon is a monomial occurring in any normal form of the germ with the same Newton polygon. Furthermore it shows that only two vertex monomials under the determinacy and on the Newton boundary are not parameter monomials in any such normal form. This implies that the Newton boundary of the given germ can be transformed to the Newton boundary of any of its normal form equations by scaling $x$ and $y$, except for terms above the determinacy.

\vspace{0.5cm}
We now prove correctness and termination of Algorithm \ref{alg:normalformeq}. 
\vspace{0.5cm}

\begin{algorithm}
\caption{Determining the moduli parameters in the normal Form of a germ with a normalized non-degenerate Newton boundary}%
\label{alg:normalformeq}
\begin{spacing}{1.05}
\begin{algorithmic}[1]
\Require{A polynomial germ $f\in\mathbb Q[x,y]$, $f\in\m^3$  of corank $2$ with a normalized non-degenerate Newton boundary;
$f_0$, the sum of the vertex monomials of $\Gamma(f)$; a set of monomials
$B=\{b_1,\ldots,b_m\}$ that is the set of all monomials of $w(f)$-degree $\geq d(f)$ in a regular basis for $f_0$}

\Ensure{A normal form of $f$ and a normal form equation equivalent to $f$ such that the normal form equation is a member of the normal form. }
\State Let $d''$ be a determinacy bound for $f$.
\State Let
 $w:=(w_1,\ldots,w_n):=w(f)$.
\State $F:=f_0+\sum_{i=1}^{m}\alpha_i \cdot b_i$.
\State $S:=\Mon(f-w(f)\dash\jet(f,d(f)))$.
\While{$S\not\subset B$}
\State Let $d'$ be the lowest $w(f)$-degree above $d(f)$ with non-zero terms in $f$.
\State Write the sum $q$ of the terms of $w(f)$-degree  $d'$ as  \[q=g\frac{\partial{f}}{\partial{x}}+h\frac{\partial{f}}{\partial{y}}+\text{terms in $B$ of $w(f)$-degree $d'$}+\text{terms of higher $w(f)$-degree than $d'$}.\]\label{transformation}
\State Define $\phi:\mathbb C[x,y]\to\mathbb C[x,y]$, $\phi(x)= x+g$, $\phi(y)= y+h$.\label{line:lincomb}
\State Let $l$ be the filtration of $\phi$.
\State $q:=w(j)\dash \jet(\phi(f)-(f+g\frac{\partial{f}}{\partial{x}}+h\frac{\partial{f}}{\partial{y}}),d')$.
\State $f:=\jet(\phi(f),d'').$
\label{line:h}
\While{$q\neq 0$}\label{secondorder}
\State Write
\label{line:partials}
\begin{eqnarray*}q
&=&\phi_x \cdot \frac{\partial f}{\partial y}+ \phi_y \cdot \frac{\partial f}{\partial x},  \text{ where }  \phi_x,\phi_y\in\langle x,y\rangle^{l+1}.
\end{eqnarray*}
\State\label{defphixphiy} $\phi_x =w\dash\jet(\phi_x,d'),\quad \phi_y =w\dash\jet(\phi_y,d')$
\State\label{phixphiy} Define $\phi:\mathbb C[x,y]\to\mathbb C[x,y]$ by \[\phi(x):=x-w\dash\jet(\phi_x,d'),\quad \phi(y):=y-w\dash\jet(\phi_y,d').\]
\State $q:=\phi(f)-(f+q)$. \State $f:=\jet(\phi(f),d'')$.
\State $l=\min(\ord(\phi_x),\ord(\phi_y)).$
\EndWhile \label{endsecondorder}
\State $S:=\Mon(f-w(f)\dash\jet(f,d(f)))$.
\EndWhile
\State Normalize two terms of $f$ of $w(f)$-degree $d(f)$, of degree $\le d''$, and not in $B$ to coefficient $1$.\label{line:normalize}
\If{ $\Gamma(f)$ does not intersect the $x$-axis} \label{line:intersect1}
$f:=f+x^a$ with $a=\mu(f)+2$.\label{line:intersect2}
\EndIf
\If{ $\Gamma(f)$ does not intersect the $y$-axis } \label{line:intersect1a}
$f:=f+y^a$ with $a:=\mu(f)+2$.\label{line:intersect2a}
\EndIf

\Return $F$, $f$
\end{algorithmic}
\end{spacing}
\end{algorithm}

\begin{paragraph}{\textit{Proof of Algorithm \ref{alg:normalformeq}}}

In the transformation $\phi$ in line \ref{line:lincomb} we know that terms coming from first partial derivatives with degree $\le d'$, not in $B$, cancel, as described in (\ref{eq:erase}). We now discuss the effect of higher order terms in the binomial expansion after applying $\phi$. Now, it follows from Theorem \ref{thm:cancellation} that $\phi(x)=x+\sum_i z_i\overline{\frac{\partial f}{\partial y}}^{z_i}$ and $\phi(y)=y+\sum_iz_i\overline{\frac{\partial f}{\partial x}}^{z_i}$. Note that, since applying any transformation of filtration $<2$ to $f$ will change its non-degenerate Newton boundary to a degenerate boundary, $\phi$ has filtration $\ge 2$. That is $\ord\left(z_i\overline{\frac{\partial f}{\partial y}}^{z_i}\right)\ge 2$ and $\ord\left(z_i\overline{\frac{\partial f}{\partial x}}^{z_i}\right)\ge 2$. We systematically consider the terms coming from the higher order binomial expansions of $\phi(f)$. 
In higher order binomial expansions the terms are of the following form:
\begin{equation}\label{termexpression}
\frac{\partial^n f}{\partial x^{s}\partial y^{n-s}}\cdot\left(z_{i_1}\overline{\frac{\partial f}{\partial y}}^{z_{i_1}}\right)\cdots\left(z_{i_s}\overline{\frac{\partial f}{\partial y}}^{z_{i_s}}\right)\cdot\left(z_{i_{s+1}}\overline{\frac{\partial f}{\partial x}}^{z_{i_{s+1}}}\right)\cdots\left(z_{i_{n}}\overline{\frac{\partial f}{\partial x}}^{z_{i_{n}}}\right), \end{equation}
where $n>1$, $i_j$'s$\in\mathbb Z$, as well as terms in $J$, where $J$ is the ideal generated by all monomials of degree $d'+1$. Let $z_{i_j}=\frac{a_{i_j}}{b_{i_j}}$. We distinguish between the following types of $b_{i_j}$'s:
\begin{enumerate}
\item[(i)] $b_{i_j}=cx^\alpha y^\beta$, $\alpha,\beta>0$; 
\item[(ii)] $b_{i_j}=cy^\beta$, $\beta>0$; 
\item[(iii)] $b_{i_j}=cx^\alpha$, $\alpha>0$; 
\item[(iv)] $b_{i_j}=1.$
\end{enumerate}

\noindent We consider the following cases:
\begin{enumerate}
\item $b_{i_j}=1$ for some $j\le s$;
\item $b_{i_j}=1$ for some $j> s$;
\item all the $b_{i_j}$'s are of type (i), (ii) and (iii), and the number of $b_{i_j}$'s of type (i) and (iii) is $ <s$.
\item all the $b_{i_j}$'s are of type (i), (ii) and (iii), the number of $b_{i_j}$'s of type (ii) $<n-s$, and the number of $b_{i_j}$'s of type (iii) is $<s$.
\item all the $b_{i_j}$'s are of type (i), (ii) and (iii), and the number of $b_{i_j}$'s of type (i) and (ii) is $<n-s$.
\end{enumerate}

Note that if $z_k=\frac{a_k}{b_k}$ and $b_k=1$, then
\begin{equation}\label{bk=1}\frac{\frac{a_k}{b_k}\left(\overline{x^s\cdot y^{n-(s+1)}\frac{\partial^n f}{\partial x^{s}\partial y^{n-s}}}^{z_k}\right)}{x^s\cdot y^{n-(s+1)}}=\frac{{a_k}\left(x^s\cdot y^{n-(s+1)}\frac{\partial^n f}{\partial x^{s}\partial y^{n-s}}\right)}{x^s\cdot y^{n-(s+1)}}=z_k\left(\overline{\frac{\partial^n f}{\partial x^s\partial y^{n-s}}}^{z_k}\right).\end{equation}
Similarly
\[\frac{\frac{a_k}{b_k}\left(\overline{x^{s-1}\cdot y^{n-s}\frac{\partial^n f}{\partial x^{s}\partial y^{n-s}}}^{z_k}\right)}{x^{s-1}\cdot y^{n-s}}=\frac{{a_k}\left(x^{s-1}\cdot y^{n-s}\frac{\partial^n f}{\partial x^{s}\partial y^{n-s}}\right)}{x^{s-1}\cdot y^{n-s}}=z_k\left(\overline{\frac{\partial^n f}{\partial x^s\partial y^{n-s}}}^{z_k}\right).\]

\noindent Furthermore, note that the monomials of $\left(\overline{x^{s-1}\cdot y^{n-s}\frac{\partial^n f}{\partial x^{s}\partial y^{n-s}}}^{z_k}\right)\in\mathbb C[x,y]$ is a subset of the monomials of 
$\overline{\frac{\partial f}{\partial x}}^{z_k}$. Noticing that for all terms $t$
in $\frac{\partial f}{\partial y}-\overline{\frac{\partial f}{\partial y}^{z_k}}$, 
it follows from the proof of Theorem \ref{thm:cancellation} that $a_k\cdot t\in \text{Ann}_{R/J'}(\frac{\partial f}{\partial x})$, where $J'$ is the ideal generated by all the terms of $w$-degree $(d'+w\dash\deg(b_k)+1)$, and $z_k=\frac{a_k}{b_k }$, it follows that
\begin{equation}\label{zk}\left({x^{s-1}\cdot y^{n-s}\frac{\partial^n f}{\partial x^{s}\partial y^{n-s}}}\right)\cdot z_k\overline{\frac{\partial f}{\partial y}}^{z_k}=z_k\left(\overline{{x^{s-1}\cdot y^{n-s}\frac{\partial^n f}{\partial x^{s}\partial y^{n-s}}}}^{z_k}\right)\cdot\frac{\partial f}{\partial y}+J.\end{equation}
In case (1), it hence follows from (\ref{bk=1}) and (\ref{zk}) that
\begin{eqnarray*}
&&\frac{\partial^n f}{\partial x^{s}\partial y^{n-s}}\cdot\sum_{k=1}^s\left(z_{i_k}\overline{\frac{\partial f}{\partial y}}^{z_{i_k}}\right)\cdot\sum_{k=s+1}^{n}\left(z_{i_{k}}\overline{\frac{\partial f}{\partial x}}^{z_{i_{k}}}\right)\\&=&\frac{1}{x^{s-1}\cdot y^{n-s}}\cdot\left(x^{s-1}\cdot y^{n-s}\frac{\partial^n f}{\partial x^{s}\partial y^{n-s}}\right)\sum_{k=1}^s\left(z_{i_k}\overline{\frac{\partial f}{\partial y}}^{z_{i_k}}\right)\cdot\sum_{k=s+1}^{n}\left(z_{i_{k}}\overline{\frac{\partial f}{\partial x}}^{z_{i_{k}}}\right)\\
&=&\underbrace{\left(z_{i_j}\overline{\frac{\partial^n f}{\partial x^{s}\partial y^{n-s}}}^{z_{i_j}}\right)\cdot\sum_{k=1\ldots s,\ k\neq j}\left(z_{i_k}\overline{\frac{\partial f}{\partial y}}^{z_{i_k}}\right)\cdot\sum_{k=s+1}^{n}\left(z_{i_{k}}\overline{\frac{\partial f}{\partial x}}^{z_{i_{k}}}\right)}_{\in\langle x,y\rangle^{l+1}\subset\mathbb C[x,y]}\cdot\frac{\partial f}{\partial y}.
\end{eqnarray*}

\noindent Note that $\left(z_{i_{k}}\overline{\frac{\partial f}{\partial x}}^{z_{i_{k}}}\right), \left(z_{i_{k}}\overline{\frac{\partial f}{\partial y}}^{z_{i_{k}}}\right)\in\langle x,y\rangle^l\subset\mathbb C[x,y]$, for all $k=s+1,\ldots, n$, where $l\ge 2$.

\noindent In case (2) it similarly follows that
\begin{eqnarray*}
&&\frac{\partial^n f}{\partial x^{s}\partial y^{n-s}}\cdot\sum_{k=1}^s\left(z_{i_k}\overline{\frac{\partial f}{\partial y}}^{z_{i_k}}\right)\cdot\sum_{k=s+1}^{n}\left(z_{i_{k}}\overline{\frac{\partial f}{\partial x}}^{z_{i_{k}}}\right)\\
&=&\underbrace{\left(z_{i_j}\overline{\frac{\partial^n f}{\partial x^{s}\partial y^{n-s}}}^{z_{i_j}}\right)\cdot\sum_{k=1}^s\left(z_{i_k}\overline{\frac{\partial f}{\partial y}}^{z_{i_k}}\right)\cdot\sum_{k=s+1,\ldots n,\ k\neq j}\left(z_{i_{k}}\overline{\frac{\partial f}{\partial x}}^{z_{i_{k}}}\right)}_{\in\langle x,y\rangle^{l+1}\subset\mathbb C[x,y]}\cdot\frac{\partial f}{\partial x}.
\end{eqnarray*}

\noindent Next we consider case (3). Note that
\begin{eqnarray*}\left(x^{s-1} y^{n-s}\frac{\partial^n f}{\partial x^{s}\partial y^{n-s}}\right)\cdot z_k\overline{\frac{\partial f}{\partial y}}^{z_k}\cdot z_l\overline{\frac{\partial f}{\partial x}}^{z_l}&=&z_k\left( \overline{x^{s-1} y^{n-s}\frac{\partial^n f}{\partial x^{s}\partial y^{n-s}}}^{z_k}\right)\cdot \frac{\partial f}{\partial y}\cdot z_l\overline{\frac{\partial f}{\partial x}}^{z_l}\\
&=&z_k\left( \overline{x^{s-1} y^{n-s}\frac{\partial^n f}{\partial x^{s}\partial y^{n-s}}}^{z_k}\right)\cdot z_l\overline{\frac{\partial f}{\partial y}}^{z_l}\cdot \frac{\partial f}{\partial x}\end{eqnarray*}
Using the above method, it follows that 
\begin{eqnarray*}
&&\frac{\partial^n f}{\partial x^{s}\partial y^{n-s}}\cdot\sum_{k=1}^s\left(z_{i_k}\overline{\frac{\partial f}{\partial y}}^{z_{i_k}}\right)\cdot\sum_{k=s+1}^{n}\left(z_{i_{k}}\overline{\frac{\partial f}{\partial x}}^{z_{i_{k}}}\right)\\&=&\frac{z_{i_j}}{x^{s-1}y^{n-s}} \left(\overline{x^{s-1} y^{n-s}\frac{\partial^n f}{\partial x^{s}\partial y^{n-s}}}^{z_{i_j}}\right)\sum_{k=1,\ldots,s,\ k\neq j}\left(z_{i_k}\overline{\frac{\partial f}{\partial y}}^{z_{i_k}}\right)\cdot\hspace{-0.2cm}\sum_{k=s+1}^{n}\left(z_{i_{k}}\overline{\frac{\partial f}{\partial x}}^{z_{i_{k}}}\right)\frac{\partial f}{\partial y},
\end{eqnarray*}
where all the $b_{i_j}$'s are ordered that first, $b_{i_j}$'s of type (i) and (iii) occur and then those of type (ii). In other words the last $n-{s+1}$ $b_{i_j}'$s are of type (ii). This means by Theorem \ref{thm:cancellation} that 
\begin{eqnarray*}
&&\frac{\partial^n f}{\partial x^{s}\partial y^{n-s}}\cdot\sum_{k=1}^s\left(z_{i_k}\overline{\frac{\partial f}{\partial y}}^{z_{i_k}}\right)\cdot\sum_{k=s+1}^{n}\left(z_{i_{k}}\overline{\frac{\partial f}{\partial x}}^{z_{i_{k}}}\right)\\&=&\frac{1}{x^{s-1}}\cdot z_{i_s}\left(\overline{x^{s-1}\cdot y^{n-s}\frac{\partial^n f}{\partial x^{s}\partial y^{n-s}}}^{z_{i_s}}\right)\sum_{k=1,\ldots,s-1}\left(z_{i_k}\overline{\frac{\partial f}{\partial y}}^{z_{i_k}}\right)\cdot\sum_{k=s+1}^{n}\underbrace{\frac{\left(z_{i_{k}}\overline{\frac{\partial f}{\partial x}}^{z_{i_{k}}}\right)}{y}}_{\in\mathbb C[x,y]}\frac{\partial f}{\partial y},
\end{eqnarray*}
Since the number of $b_{i_j}'s$ that are of type (ii) is $>n-s$, $b_{i_s}$ is of type (ii). But then, arguing similar as in (\ref{bk=1}), $x^{s-1}|z_{i_s}\left(\overline{x^{s-1}\cdot y^{n-s}\frac{\partial^n f}{\partial x^{s}\partial y^{n-s}}}^{z_{i_s}}\right)$.
Hence (\ref{termexpression}) can be expressed as
\begin{equation}\label{c3}\underbrace{q'\frac{z_{k}\overline{\frac{\partial f}{\partial x}}^{z_k}}{y}}_{\in\langle x,y\rangle^{l+1}}\frac{\partial f}{\partial y},\quad\text{with }q'\in\langle x,y\rangle^2,\end{equation}
and $b_k$ is of type (ii). In case (4) it follows similarly that (\ref{termexpression}) can be expressed as in (\ref{c3}),
where $b_k$ is of type (i) or (ii).

\noindent In case (5), exchanging $x$ and $y$ in (\ref{c3}), we express (\ref{termexpression}) as
\[\underbrace{q'\frac{z_{k}\overline{\frac{\partial f}{\partial y}}^{z_k}}{x}}_{\in\langle x,y\rangle^{l+1}}\frac{\partial f}{\partial x},\quad\text{with }q'\in\langle x,y\rangle^2,\]
where $b_k$ is of type (iii).

\noindent We conclude that $q$ can be expressed as

\begin{equation}
q=\phi_x\cdot\frac{
\partial f
}{\partial x}+\phi_y\frac{\partial f}{\partial y},
\end{equation}
where $\phi_x,\phi_y\in\langle x,y\rangle^{l+1}$. 
Then the higher orders of the binomial expansion of $\phi(f)$ of degree $\le d'$ can be removed by the first order terms of the transformation
$\phi:\mathbb C[x,y]\to\mathbb C[x,y]$ defined by \[\phi_{\text{new}}(x):=x-\phi_x,\  \phi_{\text{new}}(y):=y-\phi_y.\]
In an analogous way one can show that the sum, $q_{\text{new}}$, of higher order terms of the binomial expansion of degree $\le d'$ of $\phi_{\text{new}}(\phi(f))$ can be written in terms of the same formulas, now in terms of $\phi_{\text{new}}(x)=x-\sum_iz'_i\overline{\frac{\partial f}{\partial y}}^{z'_i}$ and $\phi_{\text{new}}(y)=y-\sum_iz'_i\overline{\frac{\partial f}{\partial x}}^{z'_i}$, as above. Note that the filtration of $\phi_{\text{new}}$ is higher than that of $\phi$.
Hence eventually there will be no terms, not in $B$, of $w$-degree $\le d'$.
$\hfill\square$
\end{paragraph}

For many examples the transformations arising from line \ref{transformation} can be chosen in such a way that the higher orders of the binomial expansion of $\phi(f)$ are of degree larger than $d'$. Choosing $g$ and $h$ in such a way, the while-loop from line \ref{secondorder} to \ref{endsecondorder} is redundant. The next example proves that this is unfortunately not in general the case. 

\begin{example}
Let $f=y^{28}+xy^7+x^2y^3+11x^2y^4+x^{22}$. Then $\frac{\partial f}{\partial x}=2xy^3+22xy^4+y^7+22x^{21}$ and $\frac{\partial f}{\partial y}=3x^2y^2+44x^2y^3+7xy^6+28y^{27}$, and a regular basis for $f$ is $x^{22}y, x^{21}y, x^{20}y, x^{19}y, x^{18}y,$ $ x^{17}y,$ $ x^{16}y,$ $x^2y^3$. Note that $x^2y^4$ is the only monomial above the Newton Boundary, with $w(f)$-degree $1008$. We can express $-11x^2y^4$ as follows in terms of the first partial derivatives.
\[-11x^2y^4=-(7xy+28y^{22})\frac{\partial f}{\partial x}+y^2\frac{\partial f}{\partial y}.\] 
The corresponding transformation $\phi$ is given by
\begin{eqnarray*}
\phi(x)&=&x-(7xy+28y^{22})=x-\sum_iz_i\frac{\partial f}{\partial y}=x-\frac{1}{y^5}\overline{\frac{\partial f}{\partial y}}^{\frac{1}{y^5}}+p_x\\
\phi(y)&=&y+y^2=y-\sum_iz_i\frac{\partial f}{\partial x}=y-(-\frac{1}{y^5}\overline{\frac{\partial f}{\partial x}}^{\frac{1}{y^5}})+p_y,
\end{eqnarray*}
where $p_x\frac{\partial f}{\partial x}$ and $p_y\frac{\partial f}{\partial y}$ are of $w(f)$-degree $1008$ or higher. 
Note that the filtration of $\phi$ is $l=2$. Now let
\[q = w\dash\jet(\phi(f)-f-\phi(x)\frac{\partial f}{\partial x}-\phi(y)\frac{\partial f}{\partial y}, 1008).\]
Then $q$ is the contribution of the orders $>1$ in the binomial expansion of $\phi$ in the $1008$-jet of $\phi(f)$. 
\begin{equation*}
q=-28xy^9+182y^{30}
\end{equation*}
To write $q$ as in line \ref{line:partials} in Algorithm \ref{alg:normalformeq},  we are computing $\phi_x$ and $\phi_y$. We do this by forming an ideal $I$ generated by the set 
\[\left\{\left.x^iy^j\frac{\partial f}{\partial x}, x^iy^j\frac{\partial f}{\partial y} \quad\right|\quad i+j=l+1=3\quad\right\},\]
and all monomials of higher piecewise degree than $1008$. We then write $q$ in terms of $\frac{\partial f}{\partial x}$, $\frac{\partial f}{\partial y}$, where the coefficients are of order $3$ or higher,  using a Gr\"obner basis of $I$ with a global ordering, and then lifting the result back to the original generators using the command \texttt{liftstd()} in \textsc{Singular}. It turns out that 
\[q=\left((-28-364y^{17}-4004y^{18})xy^2+182y^{23}\right)\cdot\frac{\partial f}{\partial x}\]
By abuse of notation, let $f=\phi(f)$. We now form a new $\phi$ as described in line \ref{phixphiy} of Algorithm \ref{alg:normalformeq}. We set \[\phi_x=28xy^2-182y^{23}\text{ and }\phi_y=0.\] We define $\phi:\mathbb C[x,y]\to\mathbb C[x,y]$ by $\phi(x)=x-\phi_x$ and $\phi(y)=y-\phi_y=y.$
Hence the first order terms of the binomial expansion of $\phi(f)$ up to degree $1008$ cancel $q$. In this case higher order terms of the binomial expansion of $\phi(f)$ does not have terms of $w(f)$-degree 1008 or lower.
In fact $w\dash\jet(\phi(f),1008)=y^{28}+xy^7+x^2y^3+x^{22}$.\medskip
\end{example}

In the next example more than one iteration of the while-loop in Algorithm \ref{alg:normalformeq} is needed to transform the germ $f$ to its normalform equation up to $w(f)$-degree $700$.
\begin{example}
For $f=x^2y^4+x^4y^2+x^{20}+y^{40}+60x^{21}y^{14}$, we have $\frac{\partial f}{\partial x}=4x^3y^2+2xy^4+20x^{19}+1260x^{20}y^{14}
$ and $\frac{\partial f}{\partial y}=2x^4y+4x^2y^3+840x^{21}y^{13}+40y^{39}$, and a regular basis is $x^4y^4$, $x^4y^3$, $x^3y^4$, $x^4y^2$, $x^3y^3$, $x^2y^4$. Note that $x^{21}y^{14}$ is the only monomial above the Newton boundary with $w(f)$-degree $700$. Now $60x^{21}y^{14}$ can be written as
\begin{eqnarray*}
60x^{21}y^{14}&=&(2x^4y^{12}+4x^2y^{14}+40y^{50}-10x^{20}y^{10})\frac{\partial f}{\partial x}-(4x^3y^{13}+2xy^{15})\frac{\partial f}{\partial y}.
\end{eqnarray*}
The corresponding transformation $\phi$ is given by 
\begin{eqnarray*}
\phi(x)&=&x-(2x^4y^{12}+4x^2y^{14}+40y^{50}-10x^{20}y^{10})=x-y^{11}\overline{\frac{\partial f}{\partial y}}^{y^{11}}\hspace{-0.5cm}-5x^{16}y^9\overline{\frac{\partial f}{\partial y}}^{x^{16}y^9}\hspace{-0.5cm}+p_x\\
\phi(y)&=&y+(4x^3y^{13}+2xy^{15}) = y-(-y^{11}\overline{\frac{\partial f}{\partial x}}^{y^{11}}-5x^{16}y^9\overline{\frac{\partial f}{\partial x}}^{x^{16}y^9})+p_y,
\end{eqnarray*}
where $p_x\frac{\partial f}{\partial x}$ and $p_y\frac{\partial f}{\partial y}$ is of $w$-degree $700$ or higher. Note that the filtration of $\phi$ is $l=16$. Now let
\[q = w\dash\jet(\phi(f)-f-\phi(x)\frac{\partial f}{\partial x}-\phi(y)\frac{\partial f}{\partial y}, 700).\]
Then $q$ is the contribution of the orders $>1$ in the binomial expansion of $\phi$ in the $700$-jet of $\phi(f)$.
\begin{eqnarray*}
q&=&-24x^{10}y^{26}-12x^8y^{28}-12x^6y^{30}-24x^4y^{32}-224x^7y^{44}-32x^5y^{46}+144x^6y^{60}\\&&+12160x^6y^{64}+12640x^4y^{66}+2800x^2y^{68}+384x^7y^{74}+952320x^7y^{78}\\&&+472320x^5y^{80}+79680x^3y^{82}+256x^8y^{88}+11704320x^6y^{94}+1467360x^4y^{96}\\&&+9600x^2y^{102}+1600y^{104}+21065216x^5y^{110}+38400x^5y^{114}-12800x^3y^{116}\\&&+12800xy^{118}+245661440x^6y^{124}+38400x^4y^{130}+38400x^2y^{132}+51200x^3y^{146}\\&&-256000xy^{152}+25600x^4y^{160}+2560000y^{202}.
\end{eqnarray*}

Note that the order of $q$ is $36$.

To write $q$ as in line (\ref{line:partials}) in Algorithm \ref{alg:normalformeq},  we are computing $\phi_x$ and $\phi_y$. We do this by forming an ideal $I$ generated by \[\left\{\left.x^iy^j\frac{\partial f}{\partial x}, x^iy^j\frac{\partial f}{\partial y} \quad\right|\quad i+j=l+1=17\quad\right\},\] and all monomials of higher piecewise degree than $700$. We write $q$ in terms of $\frac{\partial f}{\partial x}$ and $\frac{\partial f}{\partial y}$, by using a Gr\"{o}bner basis of $I$ with a global ordering, and then lifting the result back to the original generators using the command \texttt{liftstd()}. We now form $\phi$ as described in line \ref{phixphiy} of Algorithm \ref{alg:normalformeq}. Let $\phi_x$ and $\phi_y$ be as defined in line(\ref{defphixphiy}) in Algorithm \ref{alg:normalformeq}. It turns out that \begin{eqnarray*}\phi_x &=& 256x^{21}y^{10}+512x^{19}y^{12}+128x^{17}y^{14}+448x^{15}y^{16}+288x^{13}y^{18}-144x^{11}y^{20}\\&&+72x^{9}y^{22}-42x^7y^{24}+18x^5y^{26}-12x^3y^{28}+\frac{12650}{567}x^{34}y^{10}-320x^{10}y^{36}\\&&+160x^8y^{38}-80x^6y^{40}-16x^4y^{42}+288x^9y^{52}-144x^7y^{54}+72x^5y^{56}-4800x^9y^{56}\\&&+2400x^7y^{58}-1200x^5y^{60}+3640x^3y^{62}+1320xy^{64}-384x^8y^{68}+192x^6y^{70}\\&&-326400x^8y^{72}+163200x^6y^{74}+156480x^4y^{76}+39840x^2y^{78}+128x^7y^{84}\\&&-8769600x^7y^{88}+4384800x^5y^{90}+733680x^3y^{92}-38400x^7y^{92}+19200x^5y^{94}\\&&-9600x^3y^{96}+4800xy^98-21065216x^6y^{104}+10532608x^4y^{106}-115200x^6y^{108}\\&&+57600x^4y^{110}-19200x^2y^{112}+6400y^{114}+122830720x^5y^{120}+38400x^5y^{124}\\&&-19200x^3y^{126}+19200xy^{128}-51200x^4y^{140}+25600x^2y^{142}-512000x^4y^{144}\\&&+256000x^2y^{146}-128000y^{148}+12800x^3y^{156}+192000x^3y^{160}-128000xy^{162}\end{eqnarray*}
and
\begin{eqnarray*}
 \phi_y&\hspace{-0.2cm}=&\hspace{-0.2cm}-512x^{20}y^{11}\hspace{-0.1cm}-256x^{18}y^{13}\hspace{-0.1cm}-256x^{16}y^{15}\hspace{-0.1cm}-512x^{14}y^{17}\hspace{-0.1cm}-2560x^{36}y^9+40y^{65}+64000y^{163}.   
\end{eqnarray*}
Note that order of $\phi_x$ and $\phi_y$, and hence the filtration of $\phi$, is $31$, respectively.
 Unfortunately the higher order binomial expansions of $\phi(f)$ again contributes terms of $w$-degree $\le 700$. Note that this time the order of these terms is $66>36$. Now let
\[q = w\dash\jet(\phi(f)-f-\phi(x)\frac{\partial f}{\partial x}-\phi(y)\frac{\partial f}{\partial y}, 700).\]
Then
\begin{eqnarray*}
q&=&1296x^8y^{58}-1008x^6y^{60}-3072x^7y^{74}-7488x^8y^{88}+736800x^6y^{94}+189120x^4y^{96}\\&&+6049280x^5y^{110}+136922880x^6y^{124}-98264000x^4y^{130}-12833600x^2y^{132}\\&&-681446400x^3y^{146}-23950380800x^4y^{160}-77184000x^2y^{166}-5408000y^{168}\\&&-94208000xy^{182}+33001344000x^2y^{196}+13038080000y^{232}.
\end{eqnarray*}
Let $f=\phi(f)$. Repeating the process, we again write $q$ in terms of \[\left\{\left.x^iy^j\frac{\partial f}{\partial x}, x^iy^j\frac{\partial f}{\partial y} \quad\right|\quad i+j=l+1=32\quad\right\},\] and all terms of piecewise degree higher than $700$ to compute $\phi_x$ and $\phi_y$. It turns out that
\begin{eqnarray*}
\phi_x&=&-3312x^9y^{52}+1656x^7y^{54}-504x^5y^{56}+3072x^8y^{68}-1536x^6y^{70}-3744x^7y^{84}\\&&-358560x^7y^{88}+179280x^5y^{90}+94560x^3y^{92}-6049280x^6y^{104}+3024640x^4y^{106}\\&&+68461440x^5y^{120}+73408000x^5y^{124}-36704000x^3y^{126}-6146400xy^{128}\\&&+681446400x^4y^{140}-340723200x^2y^{142}-11975190400x^3y^{156}+77184000x^3y^{160}\\&&-38592000xy^{162}+94208000x^2y^{176}-47104000y^{178}+15848768000xy^{192}
\end{eqnarray*}
and
\begin{eqnarray*}
\phi_y&=&-135200y^{129}+325952000y^{193}. 
\end{eqnarray*}
Applying $\phi$, formed as in line (\ref{phixphiy}), 
\[w\dash\jet(\phi(f),700)=x^4y^2+x^2y^4+x^{20}+y^{40}.\]
\medskip
\end{example}

We now turn to the proof of our two main lemmata, and begin with Lemma \ref{zeroSum}, 

\begin{proof}

We say that terms of \( f_x := \frac{\partial f_{\Gamma'}}{\partial x} \) and \( f_y := \frac{\partial f_{\Gamma'}}{\partial y} \) are in correspondence if they originate from the same term of \( f_{\Gamma'} \).

\begin{itemize}[leftmargin=5mm]
\item If $a\neq 0$, then $x^a$ is the largest power of $x$ dividing $f_x$, and $x^{a+1}$ is the largest power of $x$ dividing $f_y$. To see this, we first note that, excluding a pure $x$-power term in $f_x$, all terms of $f_x$ and $f_y$ are in one-to one correspondence. If $f_x$ would be divisible by $x^{a+1}$ then, since all terms of $f_y$ are in correspondence to a term in $f_x$, all terms of $f_y$ would be divisible by $x^{a+2}$, contradicting the minimal choice of $a$. Thus $x^a$ is the largest power of $x$ dividing $f_x$, and $f_y$ is divisible by $x^{a+1}$. If $x^{a+2}$ would divide $f_y$, then $x^{a+1}$ would also divide every corresponding term of $f_x$. Since also a possible $x^{\alpha}$-term of $f_x$ is then divisible by $x^{a+2}$ (since it is the term with the largest $x$-exponent), this again contradicts the minimal choice of $x$. Moreover the argument above implies that $x^{a+1}$ is the highest power of $x$ dividing $f$. 
\item If $a=0$, then $x\nmid f_y$ or $x\nmid f_x$. 
\begin{itemize}
\item If $x\nmid f_y$ then $f_{\Gamma'}$ has a term that is a pure $y$-power. Hence $x\nmid f$. 
\item If $x|f_y$, then $x\nmid f_x$, which implies that $f_{\Gamma'}$ has a term of the form $xy^l$ with $l\geq 1$. Hence, in this case, if $x^s$ divides $f_y$ (and hence $f$), $s=1$.
\end{itemize}
\end{itemize}

Now, let $x^{a'}y^{b'}$ be the product of the highest power of $x$ and the highest power of $y$ dividing $\jet(f,\Gamma')$. 
\begin{itemize}
    \item Then $a'=a+1$, if $a\neq 0$.
    \item If $a=0$, we have two cases:
    \begin{itemize}
        \item If $x\nmid f_y$, then $a'=0$.
        \item If $x|f_y$, then $1$ is the highest power of $x$ dividing $f_{\Gamma'}$. Hence $a'=a+1$.
    \end{itemize}
\end{itemize}
Similarly $b'=b+1$, if $b\neq 0$. If $b=0$ and  $y\nmid f_x$, then $b'=0$. If $b=0$, $y|f_x$ (implying that $y\nmid f_y$, then $1$ is the highest power of $y$ dividing $f_{\Gamma'}$. Hence $b'=b+1$.

We assume that
\[g\cdot f_x+h\cdot f_y=0.\]
Then 
\[x^ay^b(g\cdot y^t\cdot x^i\cdot \sat(f_x)+h\cdot x^s\cdot y^{\nu}\cdot\sat(f_y))=0,\]
where $s$, $t$, $i$ and $\nu$ are defined as above. 
To see this, consider first the case $a\neq 0$, $b\neq 0$, then
\[x^ay^b(g\cdot y\cdot \sat(f_x)+h\cdot x\cdot\sat(f_y))=x^{a'}y^{b'}\left(\frac{g}{x}\cdot \sat(f_x)+\frac{h}{y}\cdot\sat(f_y)\right)=0,\]
which can be verified by the arguments above. The other cases can be similarly verified.
If $a\neq 0$, or if $a=0$, and $x|f_y$, then $s=1$ and $a'=a+1$. In both these cases $i=0$. If $a=0$ and $x\nmid f_y$, then $a'=a=0$ and $s=0$. Therfore we have in general.
\[x^{a'}y^{b}(g\cdot y^t\cdot x^i\cdot\sat(f_x)+h\cdot x^s\cdot y^\nu\cdot\sat(f_y))=0.\]
Similarly, if $b\neq 0$, or if $b=0$, and $y|f_x$, then $\nu=1$ and $b'=b+1$. In both these cases $t=0$. If $b=0$ and $y\nmid f_x$, then $b'=b=0$ and $t=0$. Therfore we have in general.
\[x^{a'}y^{b'}(\frac{g}{x^s}\cdot x^i\cdot\sat(f_x)+\frac{h}{y^t}\cdot y^\nu\cdot\sat(f_y))=0.\]
Therefore
\[\frac{g}{x^s}\cdot x^i\cdot\sat(f_x)+\frac{h}{y^t}\cdot y^\nu\cdot\sat(f_y)=0.\]
Hence $x|g$, if $a\neq 0$, or if $a=0$ and $x|f_y$, and $y|h$, if $b\neq 0$, or if $b=0$ and $y|f_x$.

We now show that $x^i\cdot\sat(f_x)$ and $y^\nu\cdot\sat(f_y)$ have no common monomial factor.
To do that, first note that $\sat (f_{\Gamma'})$ is nondegenerate by definition. This implies that $\sat (f_{\Gamma'})$ does not have a multiple monomial factor, hence $\sat (f_{\Gamma'})$ and $\frac{\partial \sat f_{\Gamma'}}{\partial x}$ have no common monomial factor, and $\sat (f_{\Gamma'})$ and $\frac{\partial \sat f_{\Gamma'}}{\partial y}$ have no common monomial factor. Thus, $x^{a'}y^{b'}$ accounts for the only multiple factors in $f_{\Gamma'}$. 

Consider the following equations for the partial derivatives of $f_{\Gamma'}$:

\[f_x=a'x^{a'-1}y^{b'}(\sat (f_{\Gamma'}))+x^{a'}y^{b'}(\frac{\partial \sat f_{\Gamma'}}{\partial x})\]

\[f_y=b'x^{a'}y^{b'-1}(\sat (f_{\Gamma'}))+x^{a'}y^{b'}(\frac{\partial \sat f_{\Gamma'}}{\partial y}).\]

If the saturations of the partial derivatives of $f_{\Gamma'}$ with respect to $x$ and $y$ share a common factor, then this factor would also be a factor of $\sat (f_{\Gamma'})$. However, this contradicts the previous equations. Furthermore $y\nmid \sat(f_x)$ if $\nu>0$ and $x\nmid\sat(f_y)$ if $i>0$.

Since $x^i\sat\left(\frac{\partial f_{\Gamma'}}{\partial x}\right)$, $y^\nu\sat\left(\frac{\partial f_{\Gamma'}}{\partial y}\right)$ is a regular sequence, the vector $\left(\frac{g}{x^s},\frac{h}{y^t}\right)$ is a polynomial multiple of the Koszul syzygy of  $(x^i\cdot\sat(\frac{\partial f_{\Gamma'}}{\partial x}),y^{\nu}\cdot\sat(\frac{\partial f_{\Gamma'}}{\partial y}))$. This completes the proof.

\end{proof}

We turn now to the proof of Lemma \ref{lemma:NB}, where we use the following observation:

\begin{remark}
\label{rmk generic}With the notation of Lemma \ref{lem f0 B}, let $m$ be a
monomial corresponding to a lattice point of $\Gamma$. Suppose $g$ is a germ with non-degenerate Newton boundary $\Gamma(g)=\Gamma$ and
with monomial $m$, such that the corresponding term cannot be removed from $g$
by a right equivalence. Then Theorem 3.20 implies that $m$ is a monomial of
$f_{0}$ or $m\in B$.
\end{remark}

We now prove Lemma \ref{lem f0 B}:

\begin{proof}
We prove the first part of the lemma: It is sufficient to show that for any
monomial $m$ in the relative interior of a face $\Delta$ of $\Gamma$, there
exists a germ $g$ as in Remark \ref{rmk generic}. For any germ $g$ with
$\Gamma(g)=\Gamma$, we can write
\[
\jet(g,\Delta)=x^{a}\cdot y^{b}\cdot g_{1}\cdots g_{n}\cdot\widetilde{g}%
\]
where $a$,$b$ are integers, $g_{1},\ldots,g_{n}$ are linear homogeneous
polynomials not associated to $x$ or $y$, and $\widetilde{g}$ is a product of
non-associated irreducible non-linear homogeneous polynomials. Note that $a$
and $b$ are the distances of $\Delta$ from the $y$- and $x$-coordinate axes.
Note also that smooth faces meeting the coordinate axes do not contain
interior lattice points. After possibly exchanging $x$ and $y$, we may assume
that for the weight $w$ of $\Delta$ we have $w(x)\geq w(y)$. First consider
the case that $w(x)>w(y)$:

\begin{itemize}[leftmargin=9mm]
\item[(1.a)] Suppose that $a\geq2$. Let $g$ be a germ non-degenerate Newton
boundary $\Gamma$. Then any right-equivalence which does not act only as a
rescaling of variables on $\jet(g,\Delta)$ generates terms on the coordinate
axes below $\Gamma$, hence, changes the Newton polygon. The claim follows
directly (choosing a non-degenerate $g=f_{0}+c\cdot m$ with $c\in
\mathbb{C}^{\ast}$).

\item[(1.b)] Consider now the case $a=0$. Since $\Gamma$ corresponds to a
normalized germ with respect to $\Delta$, we have $n=0$ and $\widetilde{g}%
\neq1$. This implies that $w(y)\nmid w(x)$, hence there does not exist a
$w$-weighted homogeneous right-equivalence except rescaling of variables.

\item[(1.c)] Finally, suppose $a=1$. If $w(y)\nmid w(x)$, then, as above,
$n=0$ and $\tilde{g}\neq0$ which implies that rescalings are the only
right-equivalences on the face. If $w(y)\mid w(x)$, then, writing
$\tau=w(x)/w(y)$, any right-equivalence which does not create any terms of
lower $w$-weight than that of $\Delta$ is of the form
\[
x\mapsto c_{1}x+c_{2}y^{\tau},\text{ }y\mapsto c_{3}y,
\]
where $c_{1},c_{3}\in\mathbb{C}^{\ast}$ and $c_{2}\in\mathbb{C}$, hence acts
on $y$ as a rescaling. We may therefore assume that $b=0$. The vertices of
$\Delta$ correspond to monomials of the form $x^{p}$ and $xy^{(p-1)\cdot\tau}$
with $p\geq2$. Any monomial in the interior of $\Delta$ is of the form
$m=x^{s}y^{\tau\cdot(p-s)}$ with $0<s<p$. For $g=f_{0}+c\cdot m$ with
$c\in\mathbb{C}^{\ast}$, the jet with regard to $\Delta$ is then
$\jet(g,\Delta)=x^{p}+c\cdot m+xy^{(p-1)\cdot\tau}$. We now show that there is
no right-equivalence which keeps $\Gamma(g)$ and, hence, is of the above form,
that removes $m$. Keeping the face $\Delta$ and removing $m$ amounts to the
conditions%
\begin{align*}
\binom{p}{p-s}c_{1}^{s}c_{2}^{p-s}+c\cdot c_{1}^{s}c_{3}^{\tau\cdot(p-s)}  &
=0\\
c_{2}^{p}+c\cdot c_{2}^{s}c_{3}^{\tau\cdot(p-s)}+c_{2}c_{3}^{(p-1)\cdot\tau}
&  =0
\end{align*}
which correspond to the vanishing of the coefficients of $m$ and
$y^{p\cdot\tau}$. Using $c_{1}\neq0$ a solution of the first equation for
$c_{2}$ is of the form%
\[
c_{2}=\tilde{c}\cdot c_{3}^{\tau}%
\]
where $\tilde{c}^{p-s}=-c/\binom{p}{s}$. Inserting this into the second
equation, leads to the equation%
\begin{align*}
0  &  =\tilde{c}^{p}\cdot c_{3}^{\tau\cdot p}+c\cdot\tilde{c}^{s}c_{3}%
^{\tau\cdot p}+\tilde{c}\cdot c_{3}^{\tau\cdot p}\\
&  =c_{3}^{\tau\cdot p}\cdot(\tilde{c}^{p}+c\cdot\tilde{c}^{s}+\tilde{c})\\
&  =c_{3}^{\tau\cdot p}\cdot\left(  \tilde{c}^{p}-\binom{p}{s}\cdot\tilde
{c}^{p}+\tilde{c}\right) \\
&  =c_{3}^{\tau\cdot p}\cdot\tilde{c}\cdot\left(  \left(  1-\binom{p}%
{s}\right)  \cdot\tilde{c}^{p-1}+1\right)
\end{align*}
Since $\binom{p}{s}\neq1$, there is a Zariski open set of values of $\tilde
{c}$, equivalently of $c$, such that the expression in the bracket does not
vanish and $g$ is non-degenerate. For such a choice of $c$ and thus of $g$, it
follows that $c_{3}=0$, a contradiction.
\end{itemize}\medskip

Now consider the case that $w(x)=w(y)$. Since $\jet(g,\Delta)$ is homogeneous,
hence factorizes into linears, in this case we have $a\geq1$, $b\geq1$ and
$\tilde{g}=0$.

\begin{itemize}[leftmargin=9mm]
\item[(1.d)] If $a,b\geq2$, and $g$ is a germ with non-degenerate Newton
boundary and Newton polygon $\Gamma$, then any right-equivalence which does
not only act as a rescaling of variables on $\jet(g,\Delta)$ changes the
Newton polygon (with the same argument as in the case $w(x)>w(y)$, $a\geq2$).

\item[(1.e)] The case $a\geq2$ and $b=1$, can be handled in the same way as
the case $w(x)>w(y)$, $a=1$, $w(y)\mid w(x)$ above. An analoguos argument also
applies to the case $b\geq2$ and $a=1$.

\item[(1.f)] In the case $a=b=1$, a Gr\"{o}bner basis calculation shows
directly that any monomial corresponding to a lattice point of $\Delta$ is
either a monomimal of $f_{0}$ or an element of $B$ (note that smooth faces do
not contain any interior lattice points): The germ $x^{p}y+xy^{p}$, $p\geq2$,
is right-equivalent to a germ $h$ (applying for instance the right-equivalance
$x\mapsto x+y$, $y\mapsto y+2x$) with vertex monomials $x^{p+1}$ and $y^{p+1}%
$, hence, by Theorem has Milnor number $\mu=(p+1)^{2}-2(p+1)+1=p^{2}$. The
germ $g=x^{p}y+xy^{p}+x^{p^{2}+2}+y^{p^{2}+2}$ is right-equivalent to $h$,
hence also has Milnor number $\mu=p^{2}$. This implies that $\left\langle
x,y\right\rangle ^{p^{2}+2}\subset\operatorname{Jac}(g)$. We determine a
standard basis of
\[
\operatorname{Jac}(g)=\left\langle g_{x},g_{y}\right\rangle +\left\langle
x,y\right\rangle ^{p^{2}+2}%
\]
where%
\[
g_{x}=p\cdot x^{p-1}y+y^{p}+(p^{2}+2)\cdot x^{p^{2}+1}\text{, }g_{y}%
=x^{p}+p\cdot xy^{p-1}+(p^{2}+2)\cdot y^{p^{2}+1}%
\]
with regard the local degree reverse lexicographic ordering. The S-polynomial
of $g_{x}$ and $g_{y}$ leads to the standard basis element $xy^{p}$ after
reducing the tail by $\left\langle x,y\right\rangle ^{p^{2}+2}$. The
S-polynomial of $xy^{p}$ with $g_{x}$ leads to the standard basis element
$y^{2p-1}$ after reducing the tail by $\left\langle x,y\right\rangle
^{p^{2}+2}$. The S-polynomial of $xy^{p}$ with $g_{y}$ reduces to zero, while
all remaining ones vanish. Hence, the classes of the monomials%
\[
y^{p+1},x^{2}y^{p-1},\ldots,x^{p-2}y^{3}\text{,}%
\]
form a basis of the Milnor algebra in degree $p+1$. Using the relation $g_{x}$
these monomials are equivalent to%
\[
x^{2}y^{p-1},\ldots,x^{p-2}y^{3},x^{p-1}y^{2}%
\]
which form a regular basis in degree $p+1$. From the standard basis of the
Jacobian it is clear that this is the only option for a regular basis, which
proves the claim.
\end{itemize}\medskip

We now prove the second part of the lemma. Let $m_{1},\ldots,m_{n}$ be the
monomials of $f_{0}$ and let $g=\sum_{i}k_{i}m_{i}$. For a face $\Delta$ of
$\Gamma$, we denote again its weight by $w$. In the first part of the proof we
have seen that for all faces $\Delta$, except those where $a=1$, $w(x)\geq
w(y)$, $w(y)|w(x)$, and $b\geq2$ if $w(x)=w(y)$, or those where $b=1$,
$w(x)\leq w(y)$, $w(x)|w(y)$, and $b\geq2$ if $w(x)=w(y)$, there does not
exist a $w$-weighted homogeneous right-equivalence except rescaling of
variables.\footnote{Note that these cases correspond to the second part of
(1.c) and to (1.e), which are the only settings where there may exist (and, in
fact, exist) transformations on the jet of the respective face, which are not
just rescalings of variables.} In order to describe in these two cases the
$w$-homogeneous transformations on $\jet(g,\Delta)$ keeping the face, by
symmetry, it is sufficient to consider the first of the two. Here, writing
$\tau=w(x)/w(y)$, the jet is of the from $\jet(g,\Delta)=y^{q}\cdot\left(
k_{i}x^{p}+k_{j}xy^{(p-1)\cdot\tau}\right)  $, $k_{i},k_{j}\neq0$, $p\geq2$.
All homogeneous transformations are of the form $x\mapsto c_{1}x+c_{2}y^{\tau
},$ $y\mapsto c_{3}y$, and keeping $\Delta$ amounts to the condition%
\[
k_{i}c_{2}^{p}+k_{j}c_{2}c_{3}^{(p-1)\cdot\tau}=0\text{,}%
\]
which implies that either $c_{2}=0$ (which corresponds to a rescaling of
variables), or that between $c_{2}$ and $c_{3}$ there is an algebraic relation
$c_{2}=k\cdot c_{3}^{\tau}$ (with $k^{p-1}=-k_{j}/k_{i}$).

We now show that there is a germ $g$ with the same monomials as $f_{0}$ and
non-degenerate Newton boundary, such that only two terms of $g$ can be
normalized to coefficient one by a right-equivalence keeping the Newton
polygon. Since this $g$ is right-equivalent to a germ in the normal form
described in Theorem \ref{theorem:normalform}, this then implies that at most
two monomials of $f_{0}$ are not in $B$.

In case there are only two monomials of $f_{0}$ of degree $\leq dt$, the claim
is trivial choosing $g=f_{0}$.\footnote{Note that this includes the case where
there is a face with $w(x)=w(y)$ and $a=b=1$.} Otherwise, let $m_{s},m_{t}$
and $m_{l}$ be three distinct monomials of $f_{0}$ of degree $\leq dt$. We
prove that there is a Zariski open set of germs $g$ such that not all three
monomials $m_{s},m_{t},m_{l}$ can be normalized to coefficient one. To see
this, it is enough to prove that, for any $g$ with the same monomials as
$f_{0}$, after choosing two monomials out of $m_{s},m_{t},m_{l}$ and
normalizing their coefficients to one, when restricting the action of the
right-equivalence group to the Newton boundary and stabilizing the two
normalized coefficients, the stabilizer acts as a finite group.

\begin{itemize}[leftmargin=9mm]
\item[(2.a)] If all three monomials lie on faces of $\Gamma$ which permit only
rescaling of variables, then the claim is obvious.

\item[(2.b)] If exactly two of the three monomials, say $m_{s},m_{t}$, lie on
faces of $\Gamma$ which permit only rescaling of variables, then, without loss
of generality, we may assume that $m_{l}$ lies on a face with $a=1$, $w(x)\geq
w(y)$, $w(y)|w(x)$. If $w(x)=w(y)$, then it is sufficient to consider the case
$b\geq2$. After normalization of the coefficients of $m_{s}$ and $m_{t}$ to
value one via rescaling of variables, stabilizing the two normalized
coefficients together with the above condition $c_{2}=0$ or $c_{2}=k\cdot
c_{3}^{\tau}$ admits only finitely many solutions.

\item[(2.c)] Suppose that exactly one of the three monomials, say $m_{s}$,
lies on a face of $\Gamma$ which permits only rescaling of variables.

If $m_{t}$ and $m_{l}$ lie on the same face $\Delta$, we may assume that
the jet of $\Delta$ is of the form $\jet(g,\Delta)=y^{q}\cdot\left(
k_{l}x^{p}+k_{t}xy^{(p-1)\cdot\tau}\right)  $. Then coefficients of the
monomials $m_{s}$ and $m_{l}$ can only be changed via rescaling of variables.
Then, similar to the previous case, normalization of the coefficients of
$m_{s}$ and $m_{l}$ to value one together with the condition $c_{2}=0$ or
$c_{2}=k\cdot c_{3}^{\tau}$ admits only finitely many solutions.

If $m_{t}$ and $m_{l}$ lie on different faces, we may assume that
$m_{t}$ lies on a face $\Delta$ with weight $w$ and $a=1$, $w(x)\geq w(y)$,
$w(y)|w(x)$. Moreover, in case $w(x)=w(y)$, we can assume that $b\geq2$, since
the case $b=1$ has already been discussed. Then $\jet(g,\Delta)$ is of the
form
\[
\jet(g,\Delta)=y^{q}\cdot\left(  k_{i}x^{p}+k_{j}xy^{(p-1)\cdot\tau}\right)
\text{,}%
\]
with $\tau=w(x)/w(y)$ and $t\in\{i,j\}$, and right-equivalences keeping the
Newton polygon act on the jet as
\[
x\mapsto c_{1}x+c_{2}y^{\tau}\text{, }y\mapsto c_{3}y
\]
satisfy the condition $c_{2}=0$ or $c_{2}=k\cdot c_{3}^{\tau}$ with
$k^{p-1}=-k_{j}/k_{i}$. Similarly, we may assume that $m_{l}$ lies on a face
$\Pi$ with weight $v$ with $b=1$, $v(x)\leq v(y)$, $v(x)|v(y)$. Again, in case
$v(x)=v(y)$, we may assume that $a\geq2$. Then $\jet(g,\Pi)$ is of the form
\[
\jet(g,\Pi)=x^{q^{\prime}}\cdot\left(  k_{r}y^{p^{\prime}}+k_{s}%
yx^{(p^{\prime}-1)\cdot\tau^{\prime}}\right)  \text{,}%
\]
with $\tau^{\prime}=v(y)/v(x)$ and $l\in\{r,s\}$, and right-equivalences
keeping the Newton polygon act on the jet as
\[
x\mapsto c_{1}x\text{, }y\mapsto c_{3}y+c_{2}^{\prime}x^{\tau^{\prime}}%
\]
satisfying the condition $c_{2}^{\prime}=0$ or $c_{2}^{\prime}=k^{\prime}\cdot
c_{1}^{\tau^{\prime}}$ with $(k^{\prime})^{p-1}=-k_{s}/k_{r}$. If $m_{t}$ is
the monomial of $\jet(g,\Delta)$ of larger $x$-degree (that is, $m_{t}%
=y^{q}x^{p}$ and $t=i$), or if $m_{l}$ is the monomial of $\jet(g,\Pi)$ of
larger $y$-degree (that is, $m_{l}=y^{q^{\prime}}x^{p^{\prime}}$ and $l=r$),
then the coefficient of the respective monomial can only be changed via
rescaling of variables, and we can argue as in previous case. Suppose now that
$m_{t}$ and $m_{l}$ are the $x$-linear monomials of the respective jets.
Normalizing the coefficients of $m_{t}$ and $m_{l}$ then amounts to the
relations
\begin{align*}
c_{1}c_{3}^{q}\cdot(k_{t}c_{3}^{(p-1)\cdot\tau}+k_{i}c_{2}^{p-1})  &  =1\\
c_{3}c_{1}^{q^{\prime}}\cdot(k_{l}c_{1}^{(p^{\prime}-1)\cdot\tau^{\prime}%
}+k_{r}(c_{2}^{\prime})^{p^{\prime}-1})  &  =1\text{,}%
\end{align*}
After inserting $c_{2}=k\cdot c_{3}^{\tau}$ with $k^{p-1}=-k_{t}/k_{i}$ the
first equation implies that $c_{2}=0$, a contradiction. Similarly, inserting
$c_{2}^{\prime}=k^{\prime}\cdot c_{1}^{\tau^{\prime}}$ with $(k^{\prime
})^{p-1}=-k_{l}/k_{r}$ into the second equation, yields a contradiction.
Hence, $c_{2}=c_{2}^{\prime}=0$, that is, right-equivalences keeping the
Newton polygon act on the jets as rescaling of variables. So after normalizing
the coeffcients of two monomials, the coefficient of the third one can take
only finitely many values.

\item[(2.d)] If none of the three monomials $m_{s},m_{t}$ and $m_{l}$ lies on
a face of $\Gamma$ which permits only rescaling of variables, then we may
assume that $m_{s},m_{t}$ lie on a face with $a=1$, $w(x)\geq w(y)$,
$w(y)|w(x)$, and $b\geq2$ if $w(x)=w(y)$, and $m_{l}$ lies on a face with
$b=1$, $w(x)\leq w(y)$, $w(x)|w(y)$, and $a\geq2$ if $w(x)=w(y)$. We can thus
argue as previous cases to obtain only finitely many solutions for the action
of the right-equivalence group on the Newton boundary.
\end{itemize}
\end{proof}

\section{Next steps}
We have also developed an algorithm to enumerate all normal form families up to a specified Milnor number or modality. These algorithms will be presented in an up-coming paper.

\end{document}